\newtheorem*{theorem}{Theorem}
\newtheorem*{conjecture}{Conjecture}
\newtheorem{proposition}{Proposition}
\newtheorem*{lemma}{Lemma}
\theoremstyle{definition}
\newtheorem{definition}{Definition}
\author{Alexandra Skripchenko\footnote{This work is supported in part by RFBR (grant no. 10-01-91056), Russian Federation Govenment Grant no. 2010-220-01-077, Russian State Programme for the Support of Leading Scientific Schools (grant NSh-4995-2012.1) and GDRI France-Russie} \\ Moscow State University}
\title{On connectedness of chaotic sections of some 3-periodic surfaces}
\begin{document}
\maketitle
\begin{abstract}
In the present paper we construct a $\mathbb{Z}^3$-periodic surface in $\mathbb {R}^3$ whose almost all plane sections of a certain direction consist of exactly one connected component. This question originates from a problem of Novikov on the semiclassical motion of an electron in strong magnetic field. Our main tool is the Rips machine algorighm for band complexes. 
\end{abstract}

\section{Introduction}
A surface in $\mathbb {R}^3$ is called \emph{triply periodic} if it is invariant under translations by vectors from the lattice $\mathbb {Z}^3$. Regular plane sections of a triply periodic surface $\widehat M$ usually split into an infinite number of connected components some of which may be unbounded. The study of asymptotic behavior of the plane sections of periodic surfaces was initiated by S.P.Novikov (\cite{3}) in 1982 motivated by an application to the conductivity theory of monocrystals in magnetic field (see \cite{5} for details). By the physical nature of the problem the surface $\widehat M$ has to be a level surface of some smooth 3-periodic function. 

It was shown by A.V.Zorich (\cite{6}) and by I.A.Dynnikov(\cite{7}) that typically a regular plane section of a triply periodic surface either consists of compact components only or has unbounded components that have the form of finitely deformed periodic family of parallel straight lines. S.P.Tsarev constructed the first non-typical example, in which the unbounded components had an asymptotic direction but didn't fit into a strip of finite width (see \cite{7} for details). In Tsarev's case the plane direction is not ``totally irrational'' meaning that the the irrationality degree of this vector is 2. 

The presence of an asymptotic direction of the discussed curves is explained by the fact that the image of such a curve under the natural projection $\pi:\mathbb {R}^3\rightarrow\mathbb {T}^3 = \mathbb {R}^3/ \mathbb {Z}^3$ densely fills not the whole surface $M=\pi(\widehat M)$ but only a part that has genus one. 

\begin{definition}A plane section of the surface $\widehat M$ by a plane $\alpha$ is called \emph{chaotic} if it has at least one connected component such that the closure of its projection $\pi$ is a subsurface of $M$ (possibly with boundary) of genus strictly greater than one. 
\end{definition}

For studying the sections mentioned above it is natural to consider the foliation $F$ on $M$ defined by a restriction of the following $1$-form $\omega$ with constant coefficients on $M$: $\omega=H_{1}dx^{1}+H_{2}dx^{2}+H_{3}dx^{3}$, where vector $(H_{1},H_{2},H_{3})$ determines our plane direction. The main tool for studying the measured foliations on surfaces is interval exchange transformations which arise as the first return map on a transversal for Hamiltonian flows on surfaces. However, in chaotic case we deal with a very particular case of such flows on surface, because the genus of the surface is equal to $3$, and our $1$-form $\omega$ has only three independent integrals. 

The first example of a chaotic section was constructed by I.A.Dynnikov in \cite{7}. The section $\alpha\cap\widehat M$ in Dynnikov's example is in a sense self-similar, which suggests that it may consist of a single connected component wandering over the whole plane. In \cite{1} Dynnikov formulated the following conjecture:
\begin{conjecture}In the case of genus three almost any chaotic section $\alpha\cap\widehat M$ consists of exactly one connected curve. . 
\end{conjecture}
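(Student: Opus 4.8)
\section*{Towards a proof of the conjecture}

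\noindent\textbf{Step 1: reduction to a cocycle.} Fix a direction $(H_1,H_2,H_3)$ whose section is chaotic. In the case of genus three the substantive situation is that $F=\ker\omega$ is minimal on all of $M$; if instead a genus-$2$ minimal component of $F$ splits off, it leaves a genus-$1$ complement carrying a linear flow, and --- the direction being totally irrational --- that remnant is governed by the classical Zorich--Dynnikov picture, so assume $F$ minimal. Choose a transversal $\tau$; the first-return map $T\colon\tau\to\tau$ is an interval exchange transformation of a special combinatorial type, and reading each return loop in $\widehat M$ assigns to it a translation vector in $\mathbb Z^3$, producing a locally constant cocycle $\varphi\colon\tau\to\mathbb Z^3$ over $T$. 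A suspension/holonomy argument shows that the connected components of a plane section $\alpha\cap\widehat M$ at a generic level are in bijection with the orbits of the $\mathbb Z^3$-extension $\hat T(x,v)=(Tx,\,v+\varphi(x))$ that meet the lifted transversal at that level, and that their number equals the index $[\mathbb Z^3:G(\varphi)]$, where $G(\varphi)\subseteq\mathbb Z^3$ is the group of essential values of $\varphi$ over $T$. Thus the conjecture is equivalent to: $G(\varphi)=\mathbb Z^3$ for almost every chaotic direction (call such $\varphi$ \emph{full}).

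\medskip\noindent\textbf{Step 2: the Rips induction.} Since $\omega$ has only three independent periods, the band complex $(\tau,T,\varphi)$ is not of surface type but of thin type; in genus three the projectivized thin band complexes are coordinatized by the Rauzy gasket. Iterating the Rips machine produces an induction operator $\mathcal R$ on this parameter space --- a projectively piecewise-linear, finite-to-one Markov map, a two-dimensional analogue of the Gauss map --- which replaces $(\tau,T,\varphi)$ by a band complex with the same leaf pattern, rescaling the interval data and transforming the period cocycle $\varphi$ by an explicit $\mathrm{GL}_3(\mathbb Z)$-valued matrix cocycle $A(\cdot)$. Fullness of $\varphi$ depends only on the foliated surface, hence is $\mathcal R$-invariant, and the parameter space of $\mathcal R$ is identified, as a measured space, with the chaotic locus of directions.

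\medskip\noindent\textbf{Step 3: ergodicity forces fullness.} The plan is now to invoke an ergodic-theoretic input playing here the role of the Masur--Veech theorem: $\mathcal R$ admits an invariant probability $\mu$ in the Lebesgue class on the gasket chart, ergodic, with $A$ log-integrable. Granting this, $\omega\mapsto G(\varphi_\omega)$ is an $\mathcal R$-invariant subgroup-valued function, hence $\mu$-a.e.\ equal to a fixed subgroup $G_0$ with $A(\omega)G_0=G_0$ for a.e.\ $\omega$. It then suffices to prove that the matrix cocycle $A$ admits no invariant proper subgroup of $\mathbb Z^3$ up to finite index --- i.e.\ $A$ acts $\mathbb Q$-irreducibly on $\mathbb Q^3$ and its essential reductions modulo every $m$ are transitive --- which forces $G_0=\mathbb Z^3$, and hence a full cocycle for $\mu$-a.e.\ direction. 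Dynnikov's self-similar example \cite{7}, and the example constructed in the present paper, are both full, providing the explicit certificate that the a.e.\ value $G_0$ is not proper along their orbits.

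\medskip\noindent\textbf{Step 4: the main obstacle.} The two analytic inputs are the crux. Proving ergodicity of the Rips/gasket induction with usable distortion and integrability is delicate because the induction is genuinely two-dimensional, unlike Rauzy--Veech, and lives on a set of zero Lebesgue measure in $S^2$; one must fix the correct measure class on the gasket itself and control non-uniformly hyperbolic return times. Harder still is the irreducibility and congruence statement for $A$: a priori a full-measure set of cocycles could be simultaneously cohomologous to cocycles valued in one common proper subgroup, and excluding this amounts to showing that the semigroup generated by the Rips-move matrices is, in the relevant ``essential'' sense, Zariski-dense in $\mathrm{SL}_3$ and surjects onto $\mathrm{SL}_3(\mathbb Z/m)$ for all $m$. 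Exhibiting a finite collection of Rips moves whose matrices already generate such a subgroup, and showing that $\mu$-almost every orbit meets the corresponding region with positive frequency, is the step I expect to resist hardest; by comparison the non-purely-chaotic case $M'\subsetneq M$ is routine, handled by detaching the linear genus-$1$ piece and invoking the Zorich--Dynnikov description there.
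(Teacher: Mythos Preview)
First, note that the statement you are addressing is stated in the paper as a \emph{conjecture}, not a theorem: the paper does not prove it in general. What the paper actually establishes is the existence result (the Theorem): for two explicit self-similar pairs $(\widehat M,H)$ --- arising from the interval identification systems $S_1$ and $S_2$ --- almost every plane section orthogonal to $H$ has a single connected component. Your proposal targets the full conjecture, hence a much stronger statement than the paper delivers.

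The paper's method for the two examples is concrete and computational, and quite different from your outline. The IIS is viewed as a band complex; the Rips machine (collapse from free subarcs) is iterated and shown to be eventually periodic on each example. One then records two integer matrices over one period: $R$ acting on band widths, whose relevant eigenvalue $\lambda$ is the contraction factor, and $L$ acting on band lengths, with largest eigenvalue $\mu$. The key numerical check is $\lambda\mu<1$: the total band measure goes to zero under iteration, so the process of successively deleting valence-one vertices from the orbit graph $\Gamma_x$ eventually removes almost every point of the support, which (via \cite{4}) forces almost every $\Gamma_x$ to be a one-ended tree. A short Euler-characteristic argument (the Lemma in Section~4), using the central symmetry of $M$, converts one-endedness of $\Gamma_x$ into connectedness of the plane section. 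No ergodic theory of a parameter space enters; everything is verified at the fixed self-similar point.

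Your programme is genuinely different and more ambitious. Two specific issues. First, your Step~1 reduction is not the one the paper uses, and as stated it is doubtful: you equate the number of components of a plane section with the index $[\mathbb Z^3:G(\varphi)]$ of the essential-value group of a $\mathbb Z^3$-cocycle over an interval exchange. Essential values govern ergodicity of the skew extension, i.e.\ the behaviour of \emph{almost every} orbit in the cover, whereas the question is the component count of an \emph{individual} level curve; moreover the relevant combinatorial object here is an interval \emph{identification} system (a band complex of thin type), not an IET, and the paper's reduction is to the number of topological ends of the orbit tree $\Gamma_x$ of that IIS, followed by the Euler-characteristic lemma. You would need either to justify the essential-value criterion carefully or to switch to the one-ended-tree formulation. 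Second, as you yourself flag in Step~4, the analytic inputs you invoke --- an ergodic invariant probability in the correct measure class for the Rips/gasket induction, log-integrability, and strong irreducibility plus congruence surjectivity of the $\mathrm{GL}_3(\mathbb Z)$ cocycle --- are precisely the open content of the general conjecture; the two self-similar examples of the paper certify fullness only along their own (periodic) renormalization orbits and do not, absent the ergodicity, pin down the almost-sure value $G_0$. So your sketch is a reasonable strategic outline for the general problem, but it is not a proof, and its Step~1 needs repair even as a reduction.
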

In the present paper we prove this conjecture for two examples of the chaotic sections. More precisely, we prove the following result:
\theoremstyle{theorem}
\begin{theorem}There exist a triply periodic surface $\widehat M$ and a vector $H$ such that the section of $\widehat M$ by almost any plane orthogonal to $H$ consists of exactly one connected component.
\end{theorem}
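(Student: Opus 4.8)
The plan is to produce two explicit triply periodic surfaces $\widehat{M}$ together with a direction $H$, modelled on Dynnikov's chaotic example of \cite{7}, and to convert the statement about plane sections into a connectivity property of a band complex that can then be settled with the Rips machine.

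\emph{Construction and reduction.} First I would write $\widehat{M}$ down as a level surface of an explicit trigonometric polynomial and pick $H$ totally irrational and in the chaotic regime of Novikov's problem, so that $M=\pi(\widehat{M})$ has genus three and the foliation $F=\ker\omega$ is minimal on $M$; since $\omega=\langle H,dx\rangle$ has only three independent periods instead of the six that a generic foliation on a genus-three surface carries, $F$ is of \emph{thin type}, which is what forces the use of the Rips machine rather than Rauzy--Veech induction. Next I would record the elementary dictionary between sections and leaves: because $H$ is totally irrational, a non-compact leaf $\ell$ of $F$ lifts to $\widehat{M}$ as a disjoint union of lines, one at each height of the coset $h_0(\ell)+\langle H,\mathbb{Z}^3\rangle$, so for generic $c$ the components of $\alpha_c\cap\widehat{M}$ are in bijection with the leaves $\ell$ of $F$ having $h_0(\ell)\equiv c$ modulo $\langle H,\mathbb{Z}^3\rangle$. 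Tracking this through the first return map to a transversal, the number of such leaves equals the index $[\langle H,\mathbb{Z}^3\rangle : P]$, where $P\subseteq\langle H,\mathbb{Z}^3\rangle$ is the group of periods $\int_\sigma\omega$ realised along a generic leaf; hence the section is connected for almost every $c$ if and only if $P=\langle H,\mathbb{Z}^3\rangle$ — equivalently, a generic leaf wraps around all three $\mathbb{Z}^3$-directions, or equivalently the Reeb graph of $\langle H,\cdot\rangle$ on $\widehat{M}$ is homeomorphic to a line.

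\emph{Band complex and the Rips machine.} I would then present $F$ as a symmetric band complex $\mathcal{B}$ of thin type via that first return map, in Dynnikov's framework, under which the assertion $P=\langle H,\mathbb{Z}^3\rangle$ becomes precisely ``the generic leaf of $\mathcal{B}$ is connected''. The heart of the argument is to run the Rips machine on $\mathcal{B}$ and check that for both examples the resulting sequence of band complexes is eventually periodic — a precise form of the self-similarity of Dynnikov's section. Eventual periodicity yields a renormalisation operator $\mathcal{R}$ with a contracting action on transverse measures, together with a substitution $\zeta$ on the finite alphabet of band labels; one then realises the generic leaf as a direct limit of the blocks $\zeta^{n}(\text{band})$ and reads its topology off the finite combinatorics of $\zeta$. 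Concretely, it suffices to verify on the explicit substitution that any two bands eventually lie in a common $\zeta^{n}$-block that is a single connected arc of a leaf, while the $\mathbb{Z}^3$-homology classes of the loops that close these arcs up span all of $\langle H,\mathbb{Z}^3\rangle$.

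\emph{Main obstacle.} I expect the hard part to be this last step: through infinitely many Rips moves one must simultaneously propagate two kinds of bookkeeping — the pattern by which bands are glued along a leaf, which governs connectedness, and the attached $\mathbb{Z}^3$-homology classes, which govern the period group — and then upgrade the conclusion from one topologically generic leaf to \emph{almost every} leaf, which needs the unique ergodicity of the thin-type system $\mathcal{B}$. The ergodicity should come from the contraction of $\mathcal{R}$ on the simplex of transverse measures, but it has to be verified for the concrete $\mathcal{B}$; and I would still need the routine check that the finitely many singular leaves change the picture only on a measure-zero set of heights $c$.
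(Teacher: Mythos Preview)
Your outline shares the right scaffolding with the paper --- build an explicit thin-type band complex, run the Rips machine, exploit eventual periodicity --- but the mechanism you plan to use at the decisive step is not the one that actually carries the proof, and the substitute you propose has gaps. The paper does \emph{not} reduce connectedness of the section to a period-group condition $P=\langle H,\mathbb{Z}^3\rangle$ on a leaf of $F$. Instead it works with the handlebody $M_1$ bounded by $M$: the plane slice of $\widehat{M_1}$ deformation-retracts onto a leaf $\Gamma_x$ of the band complex, and in the thin case these $\Gamma_x$ are infinite trees. The correct invariant is the \emph{number of topological ends} of $\Gamma_x$: if almost every $\Gamma_x$ is $1$-ended, then (using the central symmetry of $M$ to treat both complementary handlebodies at once) an Euler-characteristic count in the plane forces the section to be a single curve. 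Your period-group/Reeb-graph dictionary is at best an indirect route to the same conclusion, and the statement ``the generic leaf of $\mathcal B$ is connected'' is not the right translation --- every leaf of $\mathcal B$ is already a connected tree; the issue is how many ends it has.

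The second gap is the actual verification. The paper's proof that almost every $\Gamma_x$ is $1$-ended is a single numerical inequality: once the Rips machine enters its periodic regime, one cycle multiplies band \emph{widths} by the Perron eigenvalue $\lambda$ of one integer matrix and band \emph{lengths} by the Perron eigenvalue $\mu$ of another, and one simply checks that $\lambda\mu<1$. This means the total measure of the surviving bands tends to zero, so almost every point is eventually pruned, which is exactly the $1$-ended criterion from Bestvina--Feighn. No substitution bookkeeping, no $\mathbb{Z}^3$-homology tracking, and --- crucially --- no unique ergodicity is needed: the inequality $\lambda\mu<1$ controls Lebesgue measure directly, so your anticipated ``main obstacle'' evaporates. (Two smaller points: having only three independent periods does \emph{not} by itself make $F$ thin --- that is the generic situation for Novikov's problem and usually gives the integrable case; thinness must be arranged by choosing the parameters as a Perron eigenvector, as in Propositions~1--2. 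And the surface is built piecewise-linearly from the IIS data rather than as a trigonometric level set, precisely so that the retraction onto the band complex is transparent.)
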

\theoremstyle{remark}
\newtheorem{remark}{Remark}
\begin{remark}We construct piecewise smooth surface which can be smoothed after a finite deformation of the whole picture(more precise description of this deformation is provided in section 3). See Figure \ref{0}, where such intersections are drawn by bold lines.
\end{remark}
\begin{figure}
\includegraphics[width=12cm,height=10cm]{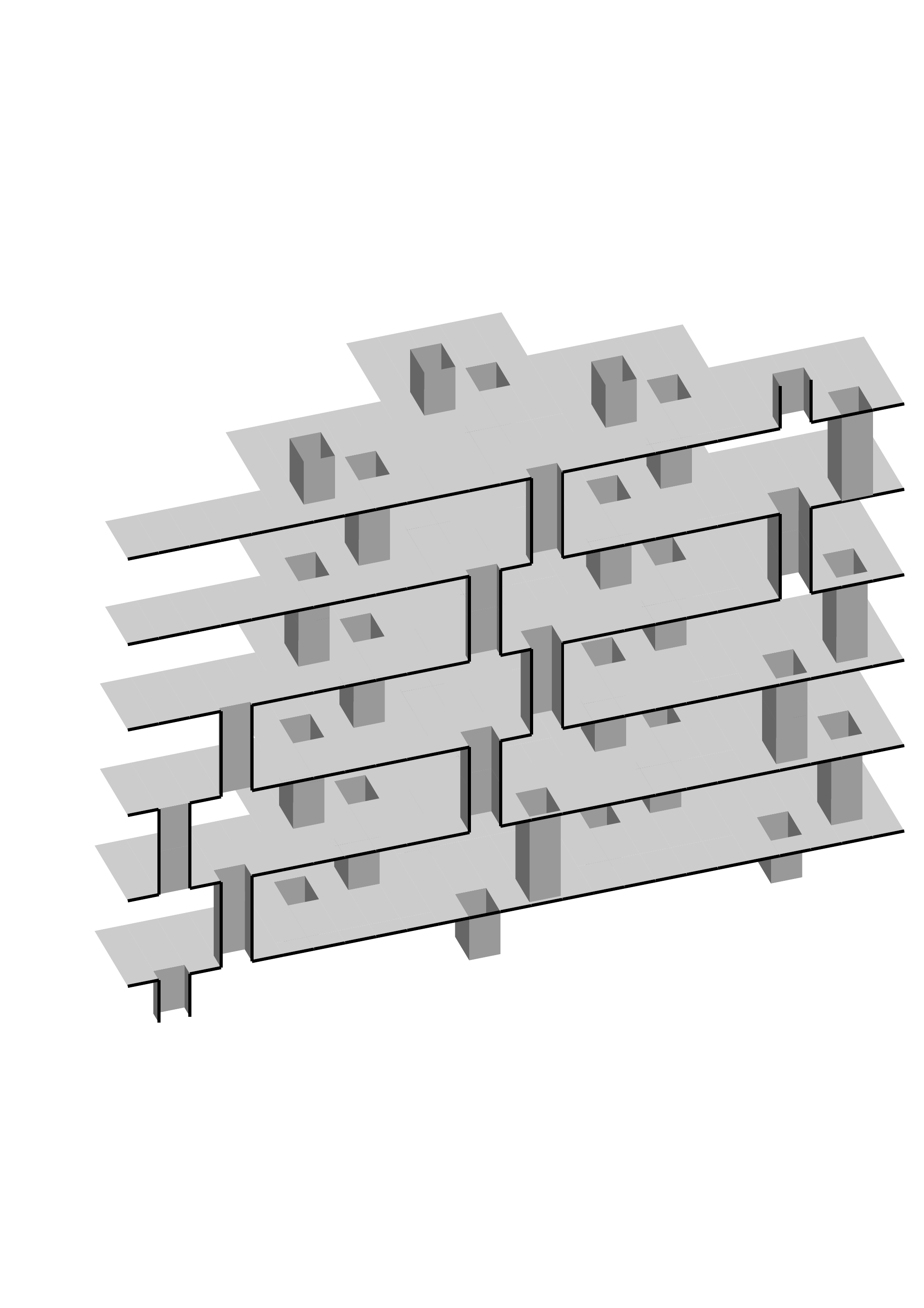}
\caption{Section of a 3-periodic surface by a plane}
\label{0}
\end{figure}
\begin{remark}We consider two examples of chaotic sections, in one of which the surface is a level surface of an even fuction which is always the case in all physically meaningful examples because surface $\widehat M$ is a Fermi surface of some metal.
\end{remark}

The paper is organized as follows. In section 2 we introduce our main tool --- interval identification systems (IIS) of thin type. IIS is a kind of generalization of interval exchange transformations and interval translation mappings. We describe an analogue of the Rauzy induction for these systems and recall the definition of IIS of thin type (thin property means that we can apply the Rauzy induction to our system infinitely many times, see precise definition below). We also recall the main idea of construction of two examples of IIS of thin type (one of them has additional symmetry): both of our systems are in some sense self-similar. The same idea is often used for construction of examples of pseudo-Anosov diffeomorphisms.

\noindent In section 3 we construct an example of chaotic section using interval identification system of thin type. Our construction is almost the same as one from \cite{1}.

\noindent In section 4 we prove our main result. In order to do this, we consider IIS as a particular case of band complex that is the notion from the theory of $\mathbb R$-trees, and describe the Rips machine algorithm for these band complexes. 

\section{Interval Identification Systems}
In \cite{1} Dynnikov reduced the question about chaotic sections in genus three to the study of \emph{interval identification systems}. In the current section we briefly recall main definitions and some results from \cite{1} and \cite{2} that will be used in proof of our main result. 

The notion of interval identification system was introduced by Dynnikov and B. Wiest in \cite{8} and studied then by Dynnikov in \cite{1}. 

\begin{definition}
An \emph {oriented interval identification system} is an object that consists of:
\begin{enumerate}
\item An interval $\left[A,B\right]\subseteq\mathbb R$ with $A<B$ (we call this
interval \emph{the support interval});
\item A natural number $n$ (we call this number \emph{the order} of the system);
\item A collection of $n$ unordered pairs $\left\{ \left[a_{i},b_{i}\right],\left[c_{i},d_{i}\right]\right\} $
of subintervals of $\left[A,B\right]$ in each of which the intervals
have equal lengths: $b_{i}-a_{i}=d_{i}-c_{i}>0$.
\end{enumerate}
\end{definition}
For every pair of intervals $\left\{ \left[a_{i},b_{i}\right],\left[c_{i},d_{i}\right]\right\} $
from an interval identification system we consider the orientation preserving affine isometry between them and we will say that a point $x$ of $\left[a_{i},b_{i}\right]$ is identified to a point $y$ of $\left[c_{i},d_{i}\right]$(and write $x\leftrightarrow_{i}y$)
if $x$ is mapped to $y$ or $y$ is mapped to $x$ under this isometry.
So we write $x$$\leftrightarrow_{i}$$y$ if there exists $t\in\left[0,1\right]$
such that $\left\{ x,y\right\} =\left\{ a_{i}+t\left(b_{i}-a_{i}\right),c_{i}+t\left(d_{i}-c_{i}\right)\right\} $.
A more general object, interval identification system, in which some pairs of intervals are identified by orientation reversing maps, is not considered in this paper. All interval identification systems in this paper are assumed to be oriented.

Interval identification systems are a natural generalization of interval exchange transformations and interval translation mappings. Similar objects also have appeared in the theory of {$\mathbb R$}-trees (sometimes without giving them specific name) as an instrument for describing the leaf space of a band complex (see section 3 for details).

\begin{definition}
An interval identification system is called \emph{balanced}, if
$A=\min_{i}(a_{i}), B=\max_{i}(b_{i})$ and ${\textstyle \sum_{i=1}^{n}\left(b_{i}-a_{i}\right)}=B-A$.
\end{definition}

\begin{definition}
An interval identification system is called \emph{symmetric} if $a_{i}-A$$=$ $B$$-d_{i}$ for each of
$i=1,\ldots,n$.
\end{definition}

In the current paper we consider only oriented interval identification systems of order 3. With each interval identification system
$$S=\left(\left[A,B\right];\left[a_{1},b_{1}\right]\leftrightarrow\left[c_{1},d_{1}\right];\left[a_{2},b_{2}\right]\leftrightarrow\left[c_{2},d_{2}\right];\left[a_{3},b_{3}\right]\leftrightarrow\left[c_{3},d_{3}\right]\right)$$
we associate a graph $\Gamma\left(S\right)$ whose vertices are all points of the support interval, and two vertices of the graph are connected by an edge if and only if these two
points are identified by our system in the sense that is described
above. The system $S$ determines an equivalence relation
$\sim$ on the support interval $\left[A,B\right]$: the points lying
on the same connected component of the graph $\Gamma\left(S\right)$ are
said to be \emph{equivalent}. The set of points equivalent in this sense
to $x$ is called \emph{the orbit} of $x$ in S. The connected component of our graph that contains a vertex $x\in [A,B]$ will be denoted by $\Gamma_{x}(S)$.

To study such properties of orbits of an interval identification systems as finiteness and being everywhere dense it is convenient to use a special Euclid type algorithm. Analog of this process that appears in the theory of interval exchange transformation is called the \emph{Rauzy induction}. The main idea is that from any interval identification system one constructs a sequense of interval identifiacation systems equivalent in a certain sense to the original one but with a smaller support. Equivalence of two IIS here means that systems have the same behavior of orbits (see precise definition below and Figure \ref{1}, where all three interval identification systems are equivalent). Combinatorial properties of this sequence are responsible for ``ergodic'' properties of the original interval identification system.

\begin{definition}
Two interval identification systems $S_{1}$ and $S_{2}$ with supports $[A_{1},B_{1}]$ and $[A_{2},B_{2}]$, respectively, are called \emph{equivalent}, if there is a real number $t \in {\mathbb R}$ and an interval $[A,B] \subset [A_{1},B_{1}] \cap  [A_{2}+t,B_{2}+t]$ such that
\begin {enumerate}
\item every orbit of each of the systems $S_{1}$ and $S_{2}+t$ contains a point lying in $[A,B]$
\item for each point $x\in [A,B]$ the graphs $\Gamma_{x}(S_{1})$ and $\Gamma_{x}(S_{2}+t)$ are homotopy equivalent through mappings that are identical on all vertices which belong to $[A,B]$ and such that the full preimage of each vertex contains only finitely many vertices of the other graph.
\end {enumerate}
\end{definition}

\noindent It is easy to see that it is an equivalence relation.

\emph{The Rauzy induction} for an interval identification system is a recursive application of admissible transmissions followed by reductions as described below.
Like in case of interval exchange transformation, application of admissible transmission is possible in the situation when two subintervals of interval identification system have the same right (or left) end which coincides with the end of the support interval (let us call these two subinterval ``the biggest one'' and ``the smallest one''); in this case we can shift the smallest of these two subintervals to left (right) in such way that the right (left) end of this subinterval will coincide with the rightmost (leftmost) point of the pair of the biggest subinterval. Application of reduction is possible in the situation when a rightmost (or leftmost) part of our support interval is covered by only one subinterval from our system; in this case we can remove this part from the support interval. An example of an iteration (transmission on the right plus reduction on the right) of the Rauzy induction to a symmetric interval identification system is shown in Figure \ref{1}, precise definitions are provided below.
 
\begin{definition}
Let $$S=\left(\left[A,B\right];\left[a_{1},b_{1}\right]\leftrightarrow\left[c_{1},d_{1}\right];\left[a_{2},b_{2}\right]\leftrightarrow\left[c_{2},d_{2}\right];\left[a_{3},b_{3}\right]\leftrightarrow\left[c_{3},d_{3}\right]\right)
$$
be an interval identification system and let one of the subintervals,
$\left[c_{1},d_{1}\right]$, say, be contained in another one $\left[c_{2},d_{2}\right]$, say. Let $S^{'}$ be the interval
identification system obtained from $S$ by replacing the pair $\left[a_{1},b_{1}\right]\leftrightarrow\left[c_{1},d_{1}\right]$
with $\left[a_{1},b_{1}\right]\leftrightarrow\left[c_{1}^{'},d_{1}^{'}\right]$
where $\left[c_{1}^{'},d_{1}^{'}\right]$=$\left[c_{1},d_{1}\right]-c_{2}$+$a_{2}$$\subset$$\left[a_{2},b_{2}\right]$.
We say that $S^{'}$is obtained from $S$ by a \emph{transmission} (of $[c_{1},d_{1}]$ along $[a_{2},b_{2}]\leftrightarrow [c_{2},d_{2}]$).

If, in addition, we have $c_{2}=A$, then this operation is called
an \emph{admissible transmission on the left, }and if $d_{2}=B$,
an \emph{admissible transmission on the right}.
\end {definition}

\begin{definition}
Let
$$
S=\left(\left[A,B\right];\left[a_{1},b_{1}\right]\leftrightarrow\left[c_{1},d_{1}\right];\left[a_{2},b_{2}\right]\leftrightarrow\left[c_{2},d_{2}\right];\left[a_{3},b_{3}\right]\leftrightarrow\left[c_{3},d_{3}\right]\right)
$$
be an interval identification system and let $d_{1}=B$. We call all
endpoints of our subintervals \emph{critical points}. Assume that
the point $B$ is not covered by any point of intervals from $S$ except $d_{1}$
and that the interior of the interval $\left[c_{1},d_{1}\right]$ contains
a critical point. Let $u$ be the rightmost critical point. Then the interval $[u,B]$ is covered by only one interval from our system. Replacing the pair $\left[a_{1},b_{1}\right]\leftrightarrow\left[c_{1},d_{1}\right]$
with $\left[a_{1},b_{1}-d_{1}+u\right]\leftrightarrow$$\left[c_{1},u\right]$
in $S$ with simultaneous cutting off the part $[u,B]$ from the support interval will be called a\emph{ reduction on the right} (of the pair $\left[a_{1},b_{1}\right]\leftrightarrow\left[c_{1},d_{1}\right]$). A reduction
on the left is defined in the symmetric way.
\end{definition}

\begin{figure}
\includegraphics[width=15cm,height=10cm]{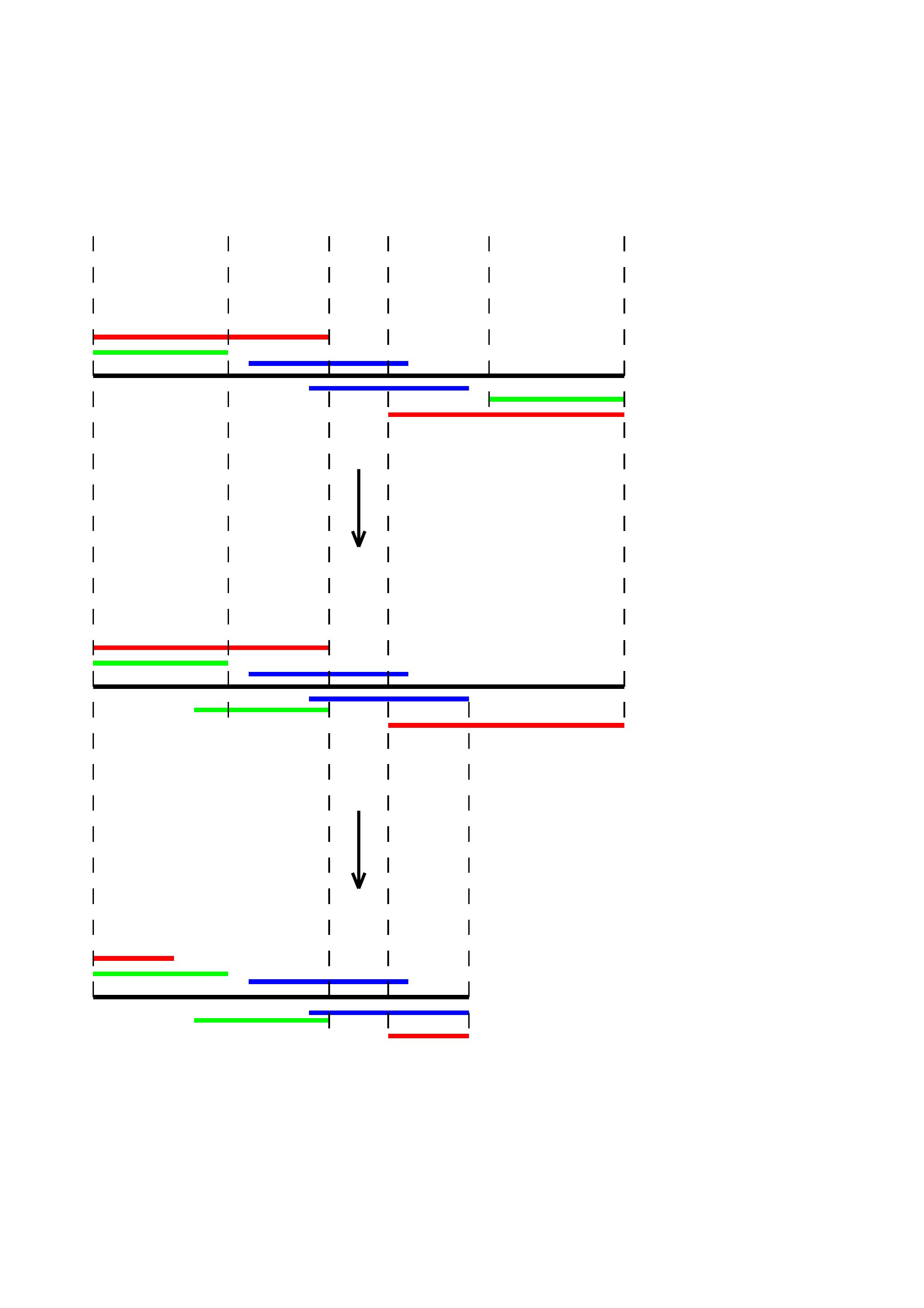}
\put(-395,235){$a_{1}=a_{2}$}
\put(-328,235){$b_{2}$}
\put(-276,235){$b_{1}$}
\put(-250,235){$c_{1}$}
\put(-207,235){$c_{2}$}
\put(-155,235){$d_{1}=d_{2}$}
\caption{An iteration of the Rauzy induction: transmission of $[c_{2},d_{2}]$ on the right + reduction of $[c_{1},d_{1}]$ on the right}
\label{1}
\end{figure}

In \cite{2} the Rauzy induction was used for the construction of symmetric interval identification system of a \emph{thin type}. By the latter we mean an interval identification system for which an equivalent system may have arbitrarily small support. In \cite{9} such interval translation mappings are called ITM of infinite type. Thin case in the theory of {$\mathbb R$}-trees was discovered by
G. Levitt in \cite{10}.

Let us recall the constructions of interval identification systems of thin type from \cite{2} and \cite{1}. We denote the following matrices:

$$N_{1}=\begin{pmatrix}
3 & 1 & -1 & -4\\
-1 & 2 & 0 & 0\\
-2 & -2 & 1 & 4\\
3 & 2 & -1 & -5\end{pmatrix}$$

and

$$N_{2}=\begin{pmatrix}
-2 & 2 & 1 & 0 & 1\\
2 & -5 & -2 & 3 &-2\\
1 & 0 & 0 & -1 & 0\\
1 &-2 & -1 & 1 & 0\\
0 & -2 & -2 & 3& -2 \end{pmatrix}.$$

\noindent It is easy to see that each matrix has exactly
one real positive eigenvalue $\lambda_{1}<1$ and $\lambda_{2}<1$. Their approximate values
are $\lambda_{1}\approx0.254$ and $\lambda_{2}\approx0.0797,$ respectively.

In \cite{2} the following result was proved:
\begin{proposition}
Let $\left(a,b,c,u\right)$ be an eigenvector
of the matrix $N_{1}$ with the eigenvalue $\lambda_{1}$ and positive coordinates.
Then the corresponding symmetric interval identification system
\begin{equation*}
\begin{split}
S_{1}=(\left[0,a+b+c\right];& \left[0,a\right]\leftrightarrow\left[b+c,a+b+c\right],\\
                    & \left[0,b\right]\leftrightarrow\left[a+c,a+b+c\right], \\
                    & \left[u,u+c\right]\leftrightarrow\left[a+b-u,a+b+c-u\right])
\end{split}
\end{equation*}
is of thin type. Approximate values of $\left(a,b,c,u\right)$ normalized by $a+b+c=1$ are equal to $\left(0.444,0.254,0.302,0.292\right)$.
\end{proposition}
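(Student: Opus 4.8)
The plan is to show that the symmetric interval identification system $S_1$ is of thin type by exhibiting an explicit self-similarity: after a suitable finite sequence of Rauzy induction steps (admissible transmissions followed by reductions), the resulting system is affinely conjugate to $S_1$ itself, with the support rescaled by the factor $\lambda_1$. Once this is established, iterating the same sequence of moves produces equivalent systems whose supports are $\lambda_1^k(a+b+c)\to 0$, which is exactly the definition of thin type. So the proof reduces to (i) choosing the right itinerary of Rauzy moves, (ii) checking that each move in the itinerary is admissible when applied to $S_1$ (and to each subsequent system), and (iii) verifying that the endpoints of the final system, read off as linear functions of $(a,b,c,u)$, are $\lambda_1$ times the endpoints of $S_1$ — and this last check is precisely where the matrix $N_1$ enters: the claim is that the vector of parameters of the reduced system equals $N_1$ applied to $(a,b,c,u)^{T}$, so that eigenvector property $N_1(a,b,c,u)^{T}=\lambda_1(a,b,c,u)^{T}$ gives the desired self-similarity.

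\textbf{Step 1: Set up the combinatorics.} First I would write out $S_1$ with its three pairs of identified subintervals and list all critical points (the $2\cdot 3\cdot 2 = 12$ endpoints, with coincidences coming from the symmetry and from $A=\min a_i$, $B=\max b_i$). Using the approximate eigenvector $(a,b,c,u)\approx(0.444,0.254,0.302,0.292)$ I would determine the actual left-to-right order of these critical points; this ordering is the combinatorial data that dictates which admissible transmission or reduction is available at each stage. Because $S_1$ is symmetric, I can work with moves on the right and obtain the mirror moves on the left for free, which roughly halves the bookkeeping.

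\textbf{Step 2: Run the induction and track parameters linearly.} I would then apply the prescribed itinerary of admissible transmissions and reductions one at a time. At every stage the new endpoints are obtained from the old ones by subtraction/addition of other endpoints (transmission: translate the smaller interval; reduction: cut at the rightmost critical point $u$ inside $[c_1,d_1]$), so each endpoint of each intermediate system is an \emph{integer linear combination} of $a,b,c,u$. The crucial point is that at each step one must re-verify admissibility — that the relevant two subintervals share the correct extreme endpoint, that $B$ (or $A$) is covered only once, and that the interior of the moved interval genuinely contains a critical point — and all of these conditions are \emph{inequalities} among integer linear forms in $(a,b,c,u)$, which can be checked using the numerical values and then confirmed exactly. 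After the full itinerary I would read off the parameter vector of the final system and confirm it equals $N_1(a,b,c,u)^T$, hence (using that $(a,b,c,u)$ is the $\lambda_1$-eigenvector with positive entries, which exists and is unique up to scale since $N_1$ has a single real positive eigenvalue less than $1$) that the final system is $S_1$ scaled by $\lambda_1$ and translated appropriately. The equivalence of each system in the chain to its successor is immediate since admissible transmissions and reductions preserve the equivalence class of an IIS (this is the content of the Rauzy-induction setup recalled above), so by transitivity $S_1$ is equivalent to $\lambda_1 S_1$, and inductively to $\lambda_1^k S_1$, whose support has length $\lambda_1^k(a+b+c)\to 0$.

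\textbf{The main obstacle} I expect is Step 2: there is no shortcut around actually executing the itinerary of Rauzy moves and verifying admissibility at each stage, and the number of steps needed to return to a rescaled copy of $S_1$ may be moderately large, so the danger is a bookkeeping slip in tracking the twelve-or-so endpoints as linear forms, or overlooking a degenerate coincidence of critical points (which would make a move inadmissible or change its outcome). A secondary subtlety is that the raw output of the induction is $S_1$ only after an affine normalization — one must be careful that the translation parameter $t$ in the definition of equivalence is chosen correctly and that positivity of all interval lengths is maintained throughout (again guaranteed by positivity of the eigenvector, but worth confirming step by step). Everything else — existence and uniqueness of the positive eigenvector, the bound $\lambda_1<1$, and the fact that admissible transmissions and reductions preserve equivalence — is either elementary linear algebra or already in place from the preceding discussion.
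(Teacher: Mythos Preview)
Your proposal is correct and follows essentially the same approach as the paper: exhibit self-similarity by running a finite itinerary of Rauzy moves, record the linear action on the parameter vector $(a,b,c,u)$ as the matrix $N_1$, and use the eigenvector relation to conclude that the output is a $\lambda_1$-scaled copy of $S_1$, hence thin. The paper (which only sketches the argument, citing \cite{2} for details) adds two specifics you left open: the itinerary consists of exactly six iterations of the Rauzy induction, and the positivity of the eigenvector is justified by observing that $N_1^{-1}$ is a nonnegative integer matrix whose powers eventually become positive, so $\lambda_1$ is the Perron--Frobenius eigenvalue.
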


The main idea of proof is standard and originates from the construction of pseudo-Anosov map on surface provided by W. Veech (see \cite{12} for details). Matrix $N_{1}$ is responsible for the behavior of parameters during the application of the Rauzy induction. One can check that $N_{1}^{-1}$ is an integer matrix with non-negative entries which become positive for sufficient power of the matrix, and an interval identification system, obtained after 6 iterations of the Rauzy induction from $S_{1}$, is a scaled down version of the original one and $\lambda_{1}$ is a coefficient of contraction. So, $\lambda_{1}$ is a Perron-Frobenius eigenvalue of $N_{1}$. 

Similar proposition was proved in \cite{1}.
\theoremstyle{theorem}
\begin{proposition}
Let $\left(a,b,c,d,e\right)$ be an eigenvector
of the matrix $N_{2}$ with the eigenvalue $\lambda_{2}$ and positive coordinates.
Then the corresponding interval identification system
\begin{equation*}
\begin{split}
S_2=(\left[0,a+b+c\right];& \left[0,a\right]\leftrightarrow\left[b+c,a+b+c\right],\\
                    & \left[0,b\right]\leftrightarrow\left[a+c,a+b+c\right], \\
                    & \left[d,d+c\right]\leftrightarrow\left[e,e+c\right])
\end{split}
\end{equation*}
is of thin type. Approximate values of $\left(a,b,c,d,e\right)$ normalized by $a+b+c=1$ are equal to $\left(0.4495,0.2943,0.2562,0.4292, 0.0898\right)$.
\end{proposition}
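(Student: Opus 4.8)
The plan is to follow the Veech-style strategy already sketched for Proposition~\ref{...} (the $S_1$ case): exhibit an explicit finite word $w$ in the Rauzy induction moves (admissible transmissions and reductions, on the left and on the right) under which $S_2$ returns to an affine copy of itself, with the induced action on the vector of interval lengths given by the matrix $N_2$ (equivalently, by $N_2^{-1}$ acting on the ``expanded'' system). Since every Rauzy move replaces a system by an equivalent one in the sense of the Definition of equivalence above, iterating $w$ indefinitely produces a chain of systems equivalent to $S_2$ whose supports are multiplied by the contraction ratio $\lambda_2<1$ at each pass, hence shrink to $0$; this is precisely what ``thin type'' means.

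First I would record the combinatorial data of $S_2$. Using the normalized approximate eigenvector $(a,b,c,d,e)\approx(0.4495,0.2943,0.2562,0.4292,0.0898)$, I would determine the linear order of all endpoints $0,\,a,\,b,\,c,\,d,\,d+c,\,e,\,e+c,\,a+c,\,b+c,\,a+b+c$ of the subintervals. This ordering shows which subintervals share an endpoint with the support interval $[0,a+b+c]$, and which extreme part of the support is covered by a single subinterval, i.e.\ exactly which move is applicable at the first step. Then I would iterate: after each move the new lengths are explicit affine (in fact integer, after the obvious rescaling) functions of the old ones, and I would carry out the prescribed word $w$ while re-checking the order of the remaining endpoints at every stage so that each successive move is genuinely admissible. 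Once the terminal system $S_2'$ is reached, I would verify that its combinatorial type is again that of $S_2$ and that its length vector equals $\lambda_2\,(a,b,c,d,e)$; this forces $(a,b,c,d,e)$ to be the Perron--Frobenius eigenvector and identifies $N_2$ (resp.\ $N_2^{-1}$) with the transition matrix of the whole word $w$.

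The step I expect to be the real obstacle is precisely the admissibility bookkeeping in the iteration. Admissibility of the chosen word $w$ is equivalent to a finite list of strict inequalities among linear forms in $a,b,c,d,e$ (``this critical point lies to the left of that one'', ``this extreme sliver is covered by only one subinterval'', ``$B$ is not covered except by the one interval being reduced'', and so on), and these must be shown to hold for the \emph{genuine} eigenvector of $N_2$, not merely for its numerical approximation. So after writing down the finitely many inequalities I would either verify them by exact arithmetic using the characteristic polynomial of $N_2$ (isolating the positive root $\lambda_2$ with a certified interval), or argue by continuity from the approximate values together with an a~priori bound that rules out any coincidence of endpoints along the orbit of $w$. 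Establishing existence of the positive eigenvector itself is the easy part: one checks that $N_2^{-1}$ is a nonnegative integer matrix, and that a suitable power of it is strictly positive, so Perron--Frobenius applies and simultaneously confirms that the construction is genuinely self-similar.

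Finally, with $w$ and its admissibility in hand, the conclusion is immediate: applying $w$ to $S_2$ yields a system $S_2'$ equivalent to $S_2$ with support interval of length $\lambda_2(a+b+c)<a+b+c$, and applying $w$ repeatedly yields equivalent systems with supports of length $\lambda_2^{\,k}(a+b+c)\to 0$. Hence $S_2$ admits equivalent systems of arbitrarily small support, i.e.\ $S_2$ is of thin type, as claimed. I would remark, exactly as in the discussion following the $S_1$ proposition, that this computation also exhibits $S_2$ as a self-similar system with contraction constant $\lambda_2$, which is the feature used later in Sections~3--4.
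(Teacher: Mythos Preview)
Your proposal is correct and follows essentially the same approach as the paper: the paper's proof consists of the single remark that ten admissible transmissions on the right, each followed by a reduction on the right, return $S_2$ to a $\lambda_2$-fold contracted copy of itself. Your outline is in fact more careful about the admissibility verification than the paper, and the only thing you would still need to discover is that the required word $w$ consists of exactly these ten right-hand iterations (no left moves are needed).
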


In order to check it, one can prove that that ten admissible
transmissions on the right with subsequent reductions on the right result in the same system but
$\lambda_{2}$-fold contracted.

\section{Plane Sections of Triply Periodic Surfaces}
Now we explain how to construct a chaotic section using interval identification system of thin type. This construction for the IIS $S_{2}$ was described in \cite{1}. Here we deal with the interval identification system $S_{1}$. 

We construct a piesewise smooth surface in the 3-torus $\mathbb T^{3}$ and consider asymptotic behavior of sections of $\mathbb {Z}^3$-covering of this surface in $\mathbb {R}^3$ by a family of parallel planes $\alpha: H_{1}x_{1}+H_{2}x_{2}+H_{3}x_{3} = const$,
where $H$ is some fixed covector.

For technical reasons we will vary not the covector $H$ but the coordinate system and fundamental domain of the lattice in $\mathbb R^{3}$, so as to have the
coordinates of $H$ constant and equal to $(0,1,0)$. We start with our four parameters ($a,b,c,u$) which were specified previously in definition of $S_{1}$. Let us introduce the following notation for rectangles in the plane $\mathbb R^{2}$:
\begin{align}
T_{1}& =[0,1]\times[0,a+b+2c] \\
T_{2}& =[1/5,2/5]\times[u,u+c] \\
T_{3}& =[3/5, 4/5]\times[a,a+c] \\
T_{4}& =[1/5,2/5]\times[a+b-u,a+b+c-u]. 
\end{align} 

\noindent One can check that $T_{2},T_{3},T_{4}\subset[0,1]\times[0,1]$. As a fundamental domain $M_{0}$ of the surface $\widehat M$, we take the following piecewise linear surface (see Figure \ref{P}):
$$(T_{1}\setminus(T_{2}\cup T_{3})) \times{1/4}  \cup(T_{1}\setminus (T_{3} \cup T_{4)})\times{3/4} \cup\partial T_{2} \times[0,1/4] \cup\partial T_{3} \times[1/4,3/4]\cup\partial T_{4} \times[3/4,1].$$
\begin{figure}
\includegraphics[width=15cm,height=15cm]{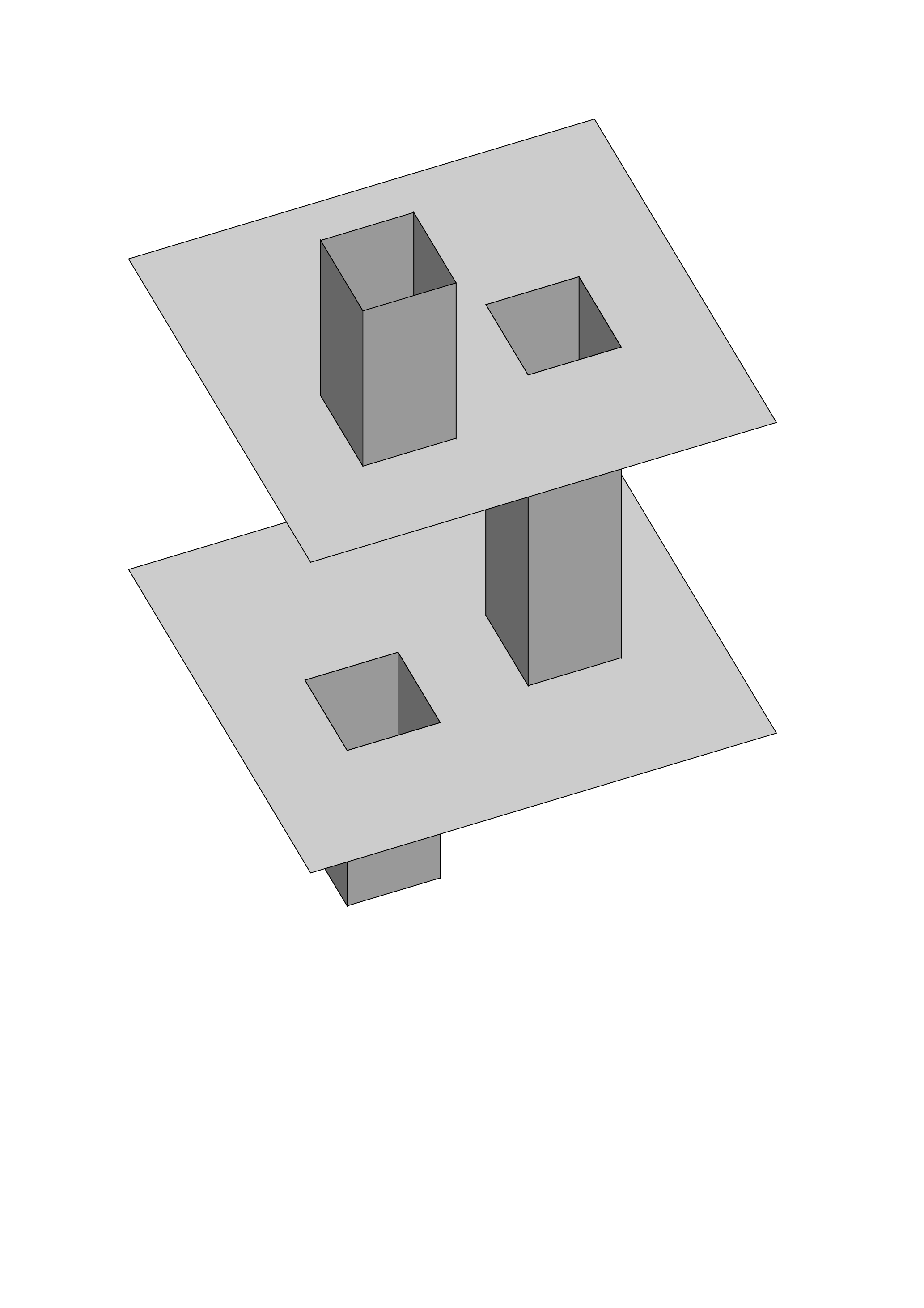}
\caption{Fundamental domain of surface M}
\label{P}
\end{figure}
As a fundamental domain we take the lattice spanned by the following three vectors:
$$e_{1}=(1,-b-c,0), e_{2}=(1,a+c,0), e_{3}=(0,a+b-2u,1).$$
The covering surface $\widehat M$ is equal to $M_{0}+G$ where $G$ is a translation group based on our lattice. As far as for covector $H$, we always take $H=(0,1,0)$.

Let us consider a band complex $X$ associated with our interval idenfication system. This band complex consists of three bands, each band is glued up to our support interval, and orientation preserving affine isometries from the definition of our system determine generalized leaves of the lamination associated with band complex (see \cite{4} for precise definitions). Bases of each band coincide with corresponding pairs of intervals (see a picture in Figure \ref{C}).

\begin{figure}
\includegraphics[width=15cm,height=15cm]{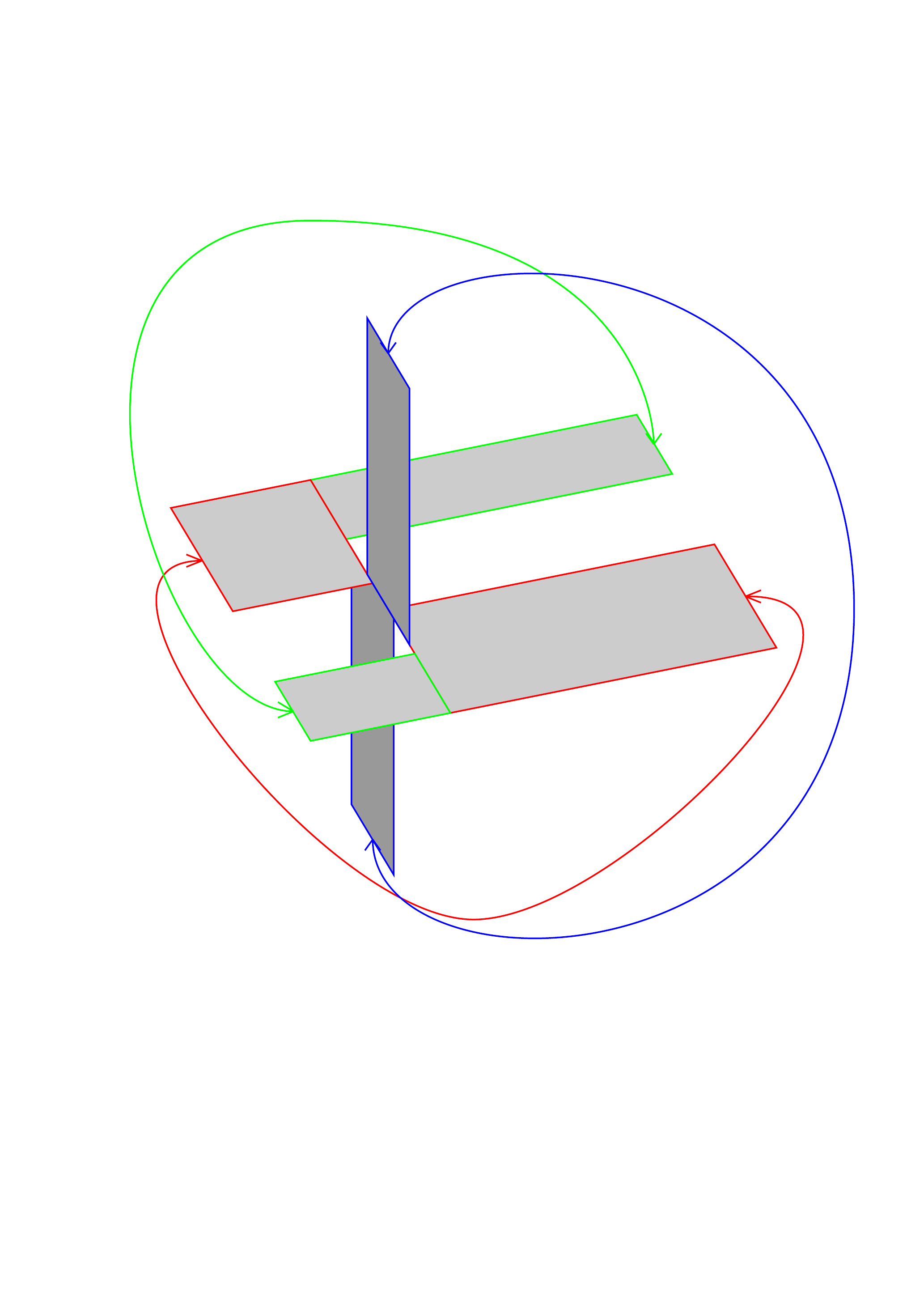}

\caption{Band complex}
\label{C}
\end{figure}
Now we can prove the following proposition.
\begin{proposition}
Let $\left(a,b,c,u\right)$ be an eigenvector
of the matrix $N_{1}$ with the eigenvalue $\lambda_{1}$ and positive coordinates. Then sections of the surface $\widehat M$, constructed as above with these values of the parameters, by any plane orthogonal to $H=(0,1,0)$ are chaotic.
\end{proposition}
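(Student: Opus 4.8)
The plan is to establish a correspondence between the plane sections of $\widehat M$ and the orbits of the interval identification system $S_1$, and then invoke the thin-type property from Proposition 1 to conclude that the relevant component of the section is chaotic, i.e. its projection to $M$ has genus strictly greater than one. First I would describe the trace of the family of planes $\alpha$ on the fundamental domain $M_0$. Because the covector is fixed to $H=(0,1,0)$, the planes are of the form $x_2 = \mathrm{const}$, and by construction each such plane meets the horizontal sheets $T_1 \setminus (T_2 \cup T_3) \times \{1/4\}$ and $T_1 \setminus (T_3 \cup T_4)\times\{3/4\}$ in horizontal segments, and meets the vertical ``tunnel walls'' $\partial T_i \times [\cdot,\cdot]$ in vertical segments that reconnect the two sheets. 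Tracing a connected component of $\alpha \cap M_0$ and then following it through the identifications induced by the lattice vectors $e_1,e_2,e_3$, I would check that the combinatorics of how segments get glued is exactly encoded by the three pairs of identified subintervals of $S_1$: the pair $[0,a]\leftrightarrow[b+c,a+b+c]$ and the pair $[0,b]\leftrightarrow[a+c,a+b+c]$ come from the two generators $e_1,e_2$ acting on the big rectangle $T_1$, while $[u,u+c]\leftrightarrow[a+b-u,a+b+c-u]$ comes from the tunnel $T_2$–$T_4$ pair (translated by $e_3$). This is essentially the same bookkeeping as in \cite{1} for $S_2$, so I would set it up carefully once and then refer to that construction.

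The key structural statement to extract is that the closure of the projection $\pi$ of a non-compact component of the section $\alpha \cap \widehat M$ is homeomorphic to (a neighborhood deformation retracting onto) the band complex $X$ associated with $S_1$ — or more precisely that its genus equals the genus of the leaf carried by $X$. Concretely, I would argue that a leaf of the foliation $F$ on $M$ defined by $\omega = dx^2$ restricted to $M$ has as its first-return map on the transversal $[0,a+b+c]$ precisely the dynamics of the graph $\Gamma(S_1)$; hence the minimal component containing that leaf fills a subsurface $M' \subseteq M$ whose complexity is read off from $S_1$. Then I would use the standard fact (from the theory of band complexes and $\mathbb{R}$-trees, \cite{4}, \cite{10}) that a thin band complex carries a minimal lamination on a surface of genus at least two: the thin property means the Rauzy induction never terminates, so no admissible transmission ever exhausts the support, which rules out the leaf space being a circle or a finite tree, and in particular rules out genus one. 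Combined with the self-similarity from Proposition 1 (six iterations of Rauzy induction reproduce $S_1$ up to the contraction $\lambda_1$), one gets that the filled subsurface is a proper, genus-$\geq 2$ subsurface with boundary, so by Definition 1 the section is chaotic. Since Proposition 1 already provides the eigenvector with positive coordinates making $S_1$ of thin type, the hypothesis of the present proposition is exactly what is needed.

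The main obstacle I expect is the bridge between ``the section'' as a curve in $\mathbb{R}^3$ and ``the orbit structure of $S_1$'' — i.e. verifying that the piecewise-linear surface $M_0$ with the chosen lattice really does realize $S_1$ and not some other system, and that no degeneracies occur for almost every value of the constant in $\alpha$ (one must exclude the measure-zero set of planes passing through the finitely many critical points / vertices of the tunnels). This requires checking the inclusions $T_2,T_3,T_4 \subset [0,1]^2$ (stated in the excerpt) together with the precise vertical placement of the rectangles, so that as $x_2$ sweeps through $[0,a+b+c+\text{shifts}]$ the induced identifications on the horizontal cross-section match $[0,a]\leftrightarrow[b+c,a+b+c]$, $[0,b]\leftrightarrow[a+c,a+b+c]$, $[u,u+c]\leftrightarrow[a+b-u,a+b+c-u]$ on the nose. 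Once that identification is nailed down, the genus statement is a formal consequence of the thin-type property established in \cite{2} and recorded here as Proposition 1, and the ``almost any plane'' clause follows because the exceptional planes form a finite (hence null) set of levels. I would therefore spend most of the write-up on the geometric realization and keep the $\mathbb{R}$-tree input as a citation.
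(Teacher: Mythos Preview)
Your overall strategy---reduce the plane sections to the dynamics of $S_1$ and then invoke the thin-type property---is the same as the paper's, and the geometric bookkeeping you outline (tracing $x_2=\mathrm{const}$ through $M_0$ and matching the tunnel identifications with the three pairs of $S_1$) is essentially what the paper does via the handlebody $M_1$ and the deformation retraction $\Pi\cap\widehat{M_1}\to\Pi\cap\widehat X$. So the scaffolding is right.

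There are, however, two genuine gaps relative to the paper's argument. First, you never address saddle connections. Even if the interval identification system is minimal, the foliation $F$ on $M$ could in principle have a cycle of saddle connections that cuts $M$ into lower-genus pieces; minimality of $S_1$ alone does not exclude this. The paper handles this by a direct check: the singularities of $F$ (the tangency points of $M$ with the planes $x^2=\mathrm{const}$) all lie on distinct level sets, so no saddle connection between distinct saddles is possible. You need this step or an equivalent one.

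Second, your appeal to a ``standard fact'' that a thin band complex carries a minimal lamination on a surface of genus $\ge 2$ is not how the paper argues, and as stated it is vague: thinness is a statement about the Rauzy/Rips process on the IIS, not directly about the genus of an ambient surface. The paper's route is more concrete and cleaner: one observes that $M$ itself has genus $3$, and then proves that $F$ is minimal on \emph{all} of $M$ (no closed leaves, no saddle connection cycles). The absence of closed leaves is exactly what the retraction to $\widehat X$ buys---closed leaves would force compact or non-simply-connected components in $\Pi\cap\widehat{M_1}$, hence finite orbits in $S_1$, which thin type excludes. Once $F$ is minimal, the closure of every leaf is the whole genus-$3$ surface $M$, so every section (not merely almost every) is chaotic by definition. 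I would recommend replacing your abstract genus-$\ge 2$ claim with this explicit ``genus $3$ plus minimality'' argument, and adding the saddle check.
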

\begin{proof}
Let us denote by $M$ the image of the projection of $\widehat M$ in torus: $M=\pi(\widehat M)$. For studying the sections $\alpha\cap\widehat{M}$ we consider the foliation $F$ on $M$ defined by a restriction of the 1-form $H_{1}dx^{1}+H_{2}dx^{2}+H_{3}dx^{3}$ with the constant coefficients to $M$. It is easy to see that the surface $M$ has genus $3$. We need to show that the foliation $F$ is minimal, that is the closure of any leaf of $F$ coincides with $M$. The leaves of this foliation are the images of the sections under consideration under the projection $\pi$. We need to check that the resulting foliation $F$ does not have closed leaves and saddle connection cycles.  Taking into account, that our thin interval identification system is minimal (see \cite{1} for details of proof), it is enough to ensure that there are no closed leaves or saddle connections between distinct saddles. It can be directly checked that all saddles lie in different planes of the form $x^{2}=const$; hence saddle connections between them are impossible.

In order to show that $F$ does not have closed leaves, we consider not the surface M itself but one
of the two parts into which it cuts the torus $\mathbb T^{3}$. Both parts are filled handlebodies of genus $3$. We denote one of them (which contains a point $\pi(0,0,1/2)$) by $M_{1}$ and $\mathbb Z^{3}$-covering of $M_{1}$ by $\widehat{M_{1}}$. One can check directly that for any plane $\Pi$ defined by an equation of the form $x^{2}=const$ the section $\Pi\cap \widehat M_{1}$ has $\Pi\cap \widehat X$ as a deformation retract and the restriction of the form $\omega=dx^{2}$ to $X$ defines a lamination associated to the band complex $X$ (see Figure \ref{S}). $\widehat X$ is an abelian universal cover of $X$.
\begin{figure}
\includegraphics[width=14cm,height=15cm]{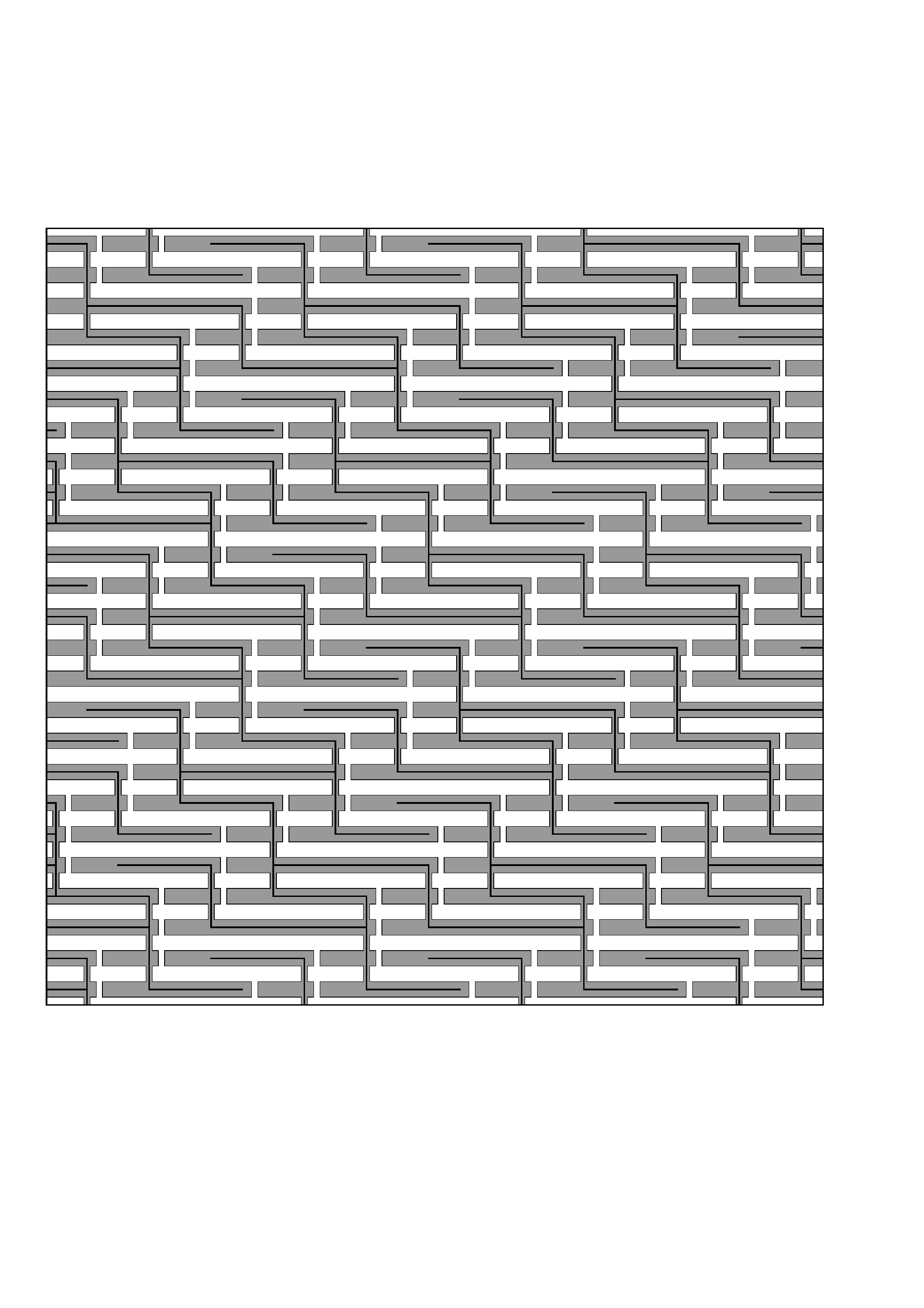}
\caption{Section of the submanifold $\widehat{M_{1}}$ and universal cover of band complex}
\label{S}
\end{figure}

The foliation $F$ has closed leaves if and only if the sections of the manifold with boundary $\widehat{M_{1}}$ by planes $x^{2}=const$ have either compact or non-simply-connected regular components. Hence the same have to be true after replacing $\widehat{M_{1}}$ on $\widehat{X}$. Furthermore, the latter can be reformulated in terms
of the system $S_{1}$ by saying that it must have an essential set of finite orbits and an essential set
of non-simply-connected ones, but we know that $S_{1}$ doesn't have finite orbits.
\end{proof}

\noindent As it was mentioned before our surface can be smoothed so that all $H$-sections will be just finitely deformed. In order to check this it is enough to ensure that the saddle singularities of the foliation $F$ which are the tangency points of the surface and the planes $x^2=const$ appear on the four rectangulars that are parallel to these planes. 

\section{Main Result}
\label{3}
In this section we prove our main theorem. As it was shown in \cite{1}, the question about number of connected components of chaotic section can be reduced to the studying of topological structure of orbits of corresponding interval identification systems. More precisely, as it was proved independently by Dynnikov in \cite{1} and D. Gaboriau in \cite{11}, for IIS of thin type almost all graphs $\Gamma_{x}$ are infinite trees. So, the same is true for our 
interval identification systems $S_{1}$ and $S_{2}$, and in order to estimate the number of connected components of our chaotic sections we need to know how many topological ends each of these graphs have. It was observed in \cite{1} that the property of the IIS to have almost all graphs $\Gamma_{x}$ with exactly one topological end implies that for the triply periodic surface $\widehat M$ and vector $H$ constructed in previous part section of $\widehat M$ by almost any plane orthogonal to $H$ consists of exactly one connected curve. We explain this connection in lemma below.

Similar questions number of topological ends for objects related to ours were also discussed in \cite{4} in terms of leaves of the lamination associated to the band complex and in \cite{11} in terms of Cayley graphs.  
We use the terminology from \cite{4}: a \emph{1-ended tree} is a locally compact graph obtained from a ray by attaching to it infinitely many finite trees without a uniform bound on their diameter. Similarly, a \emph{2-ended tree} is obtained from the above definition by replacing the word "ray" by the word "line".

Taking into account that in our case the number of vertices of valence one is equal to number of vertices of valence three one can check that for almost all graphs $\Gamma_{x}$ number of topological ends can be equal to only one or two (see also \cite{11}, where this fact is proved for more general case). 

Let us prove the following result:

\theoremstyle{theorem}

\begin{proposition} Let $\left(a,b,c,u\right)$ be an eigenvector
of the matrix $N_{1}$ with the eigenvalue $\lambda_{1}$ and positive coordinates.
Then for the corresponding symmetric interval identification system
\begin{equation*}
\begin{split}
S_{1}=(\left[0,a+b+c\right];& \left[0,a\right]\leftrightarrow\left[b+c,a+b+c\right],\\
                    & \left[0,b\right]\leftrightarrow\left[a+c,a+b+c\right], \\
                    & \left[u,u+c\right]\leftrightarrow\left[a+b-u,a+b+c-u\right])
\end{split}
\end{equation*}
almost all graphs $\Gamma_{x}$ have only one topological end. 
\end{proposition}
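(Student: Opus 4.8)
The plan is to exploit the self-similarity of $S_1$ under the Rauzy induction, which by Proposition 1 reproduces the system $\lambda_1$-contracted after a fixed number (six) of iterations. Translating $S_1$ into the band-complex language of Section 3, one iteration of the Rauzy induction corresponds to one step of the Rips machine applied to the band complex $X$; six steps send $X$ to a band complex combinatorially isomorphic to $X$ with all widths scaled by $\lambda_1$. The first step is to set up this dictionary carefully: describe explicitly the moves of the Rips machine on the three-band complex $X$, record how the bands are subdivided and re-glued at each of the six steps, and verify that the composite is the identity on combinatorial type. This is essentially bookkeeping of the transmissions and reductions already described, reorganized as a return map of the Rips machine to a fixed ``shape''.

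Next I would use this return map to understand the graphs $\Gamma_x$ for a typical $x$. Because almost every $\Gamma_x$ is an infinite tree (Dynnikov, Gaboriau, as recalled in the text) with equally many valence-one and valence-three vertices, it has one or two ends; so it suffices to rule out the two-ended case. The idea is that a two-ended tree contains a bi-infinite embedded line, and under the Rips machine / Rauzy induction such a line would have to be ``reproduced'' at every scale. Concretely, I would argue that if a positive-measure set of points $x$ had $\Gamma_x$ with two ends, then by the self-similarity this set is invariant (up to the $\lambda_1$-scaling and the measurable identifications induced by the six Rauzy steps), and a bi-infinite line in $\Gamma_x$ would descend, under the retraction identifying $X$ with its six-times-induced copy, to a bi-infinite line in the induced system lying in an arbitrarily small support interval. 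Iterating, one gets that almost every point on the line carries combinatorial data forcing the line to pass through the shrinking supports infinitely often in both directions; a Borel–Cantelli / ergodicity argument against the $\lambda_1$-contraction then yields a contradiction, since the total width available at stage $k$ is $\lambda_1^k(a+b+c)\to 0$ while a genuine line must consume a definite proportion of width at infinitely many scales.

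More precisely, for the quantitative core I would track, along the Rauzy induction, the ``itinerary'' of a point $x$: the sequence of bands its orbit graph passes through before returning to the current (shrunken) support. The self-similar structure makes this itinerary governed by the substitution encoded in the matrix $N_1$; since $N_1^{-1}$ is a non-negative integer matrix, positive for a suitable power (stated in the text), the induced system is uniquely ergodic and the itineraries equidistribute. A two-ended $\Gamma_x$ would correspond to an itinerary that is, in both directions, eventually ``straight'' — i.e. avoids the branching moves — but equidistribution forces the branching moves to occur with positive frequency in both directions, so the tree $\Gamma_x$ acquires infinitely many side-branches on both sides of any candidate line, contradicting the definition of a 2-ended tree (a line with uniformly bounded-diameter finite trees attached). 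Hence almost every $\Gamma_x$ is 1-ended.

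The main obstacle I expect is the second step: making rigorous the claim that the self-similarity ``reproduces a line at every scale'' and converting it into a contradiction. The delicate point is measure-theoretic — one must show that the (a priori only Borel) set of $x$ with two-ended $\Gamma_x$ is genuinely invariant under the return map of the Rauzy induction and then apply ergodicity of that return map, rather than of a single Rauzy step. Controlling the retraction in the equivalence of band complexes so that it carries ends to ends (the homotopy equivalences in the definition of equivalence only identify vertices in the common subinterval, with finite fibers) is where the argument needs the most care; the rest is the combinatorial unwinding of the six-step cycle and a standard Perron–Frobenius/unique-ergodicity input.
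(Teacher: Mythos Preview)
Your proposal has a genuine gap, and the approach diverges from the paper's in a way that matters.

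First, the dictionary you assume between Rauzy induction and the Rips machine is not correct. The Rauzy step is a transmission followed by a reduction, whereas for thin band complexes the Rips machine eventually performs only collapses from free subarcs (reductions, no transmissions). In particular the six-step Rauzy cycle with contraction $\lambda_1$ does \emph{not} transfer to a Rips-machine cycle on the original three-band complex. The paper finds the Rips cycle by direct computation: after five preliminary Rips moves the complex becomes a \emph{four}-band complex $Y$ on \emph{two} support intervals, and six further Rips moves return to the same combinatorial type with widths scaled by $\lambda_1^{2}$ (not $\lambda_1$). So the self-similar object you want to iterate is not the one you describe.

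Second, and more seriously, your mechanism for ruling out two ends does not work. You argue that equidistribution forces branching moves with positive frequency in both directions, hence ``infinitely many side-branches on both sides of any candidate line, contradicting the definition of a 2-ended tree (a line with uniformly bounded-diameter finite trees attached)''. But the paper's (Bestvina--Feighn) definition of a 2-ended tree is exactly a line with infinitely many finite trees attached \emph{without} a uniform bound on their diameters; side-branches with positive frequency are entirely compatible with two ends. What actually distinguishes one end from two is whether the trimming process (iteratively deleting valence-one vertices) eventually removes almost every point of the support. The quantitative content is the growth of \emph{lengths} of the bands under the Rips machine, which you never track: each Rips collapse lengthens surviving long bands, and the question is whether the total band area (width $\times$ length) tends to zero. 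The paper computes the length-transition matrix $L_1$ on the cycle, shows its dominant eigenvalue $\mu_1$ satisfies $\lambda_1^{2}\mu_1<1$, and concludes; that inequality is the missing idea in your outline, and without it the ergodicity/Borel--Cantelli sketch does not separate the one-ended from the two-ended case.
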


It was observed in \cite{1} that this property implies that for the triply periodic surface $\widehat M$ and vector $H$ constructed in previous part section of $\widehat M$ by almost any plane orthogonal to $H$ consists of exactly one connected curve. We explain this connection in Lemma.

\begin{proof}
Informally, the number of topological ends of an infinite tree is the number of connected components on infinity. So let us consider the following process: on each step we find vertices of valence one of our graph and remove them with corresponding edges (including ends which are points of the support interval) from the graph. As it was shown in \cite{4}, in order to prove our statement it is enough to check that after infinite number of iterations of such process we remove almost all points of our support segment. For proof of that we consider our IIS as a particular case of a band complex with vertical lamination and use the Rips machine algorithm for band complexes. The Rips machine is also a Euclid type algorithm with the same purpose as the Rauzy induction: it allows us to construct a band complex  with the same behavior of leaves of the lamination as original one has, but with smaller support. The main difference between two algorithms for thin band complexes is that application of the Rips machine consists only in reductions (without translations) but it is possible to remove any (not only rightmost or leftmost) part of the support interval which is covered by only one base of our band complex. We proceed with precise description.

In general case application of the Rips machine to a band complex cosists in consecutive application of six moves which are geometric version of moves of Razborov and Makanin. These moves produce from $X$ another band complex $X'$ which is equivalent to $X$ (or, more precisely, if $X$ and $X'$ are related by moves, then the leaf spaces of their universal covers equipped with pseudometric are isomorphic). The complete list of moves is provided in \cite{4}, but for band complexes of thin type only one type of such moves is eventually applied. This move ($M5$ in terms of \cite{4}) is called \emph{collapse from a free subarc}. 

A subarc $J$ of a base is said to be \emph{free} if $J$ has positive measure and the interior of $J$ meets no other base. Assume that $J$ is maximal free subarc of some base (say, $[x,y]\times 0$) of a band (say, $[0,a]\times[0,1]$). The move consists of collapsing $J\times[0,1]$ to $J\times1 \cup FrJ\times[0,1],$ where $FrJ$ means boundary of $J$. Typically, the band is replaced by two new bands (see Figure \ref {55}).
\begin{figure}
\includegraphics[width=10cm,height=8cm]{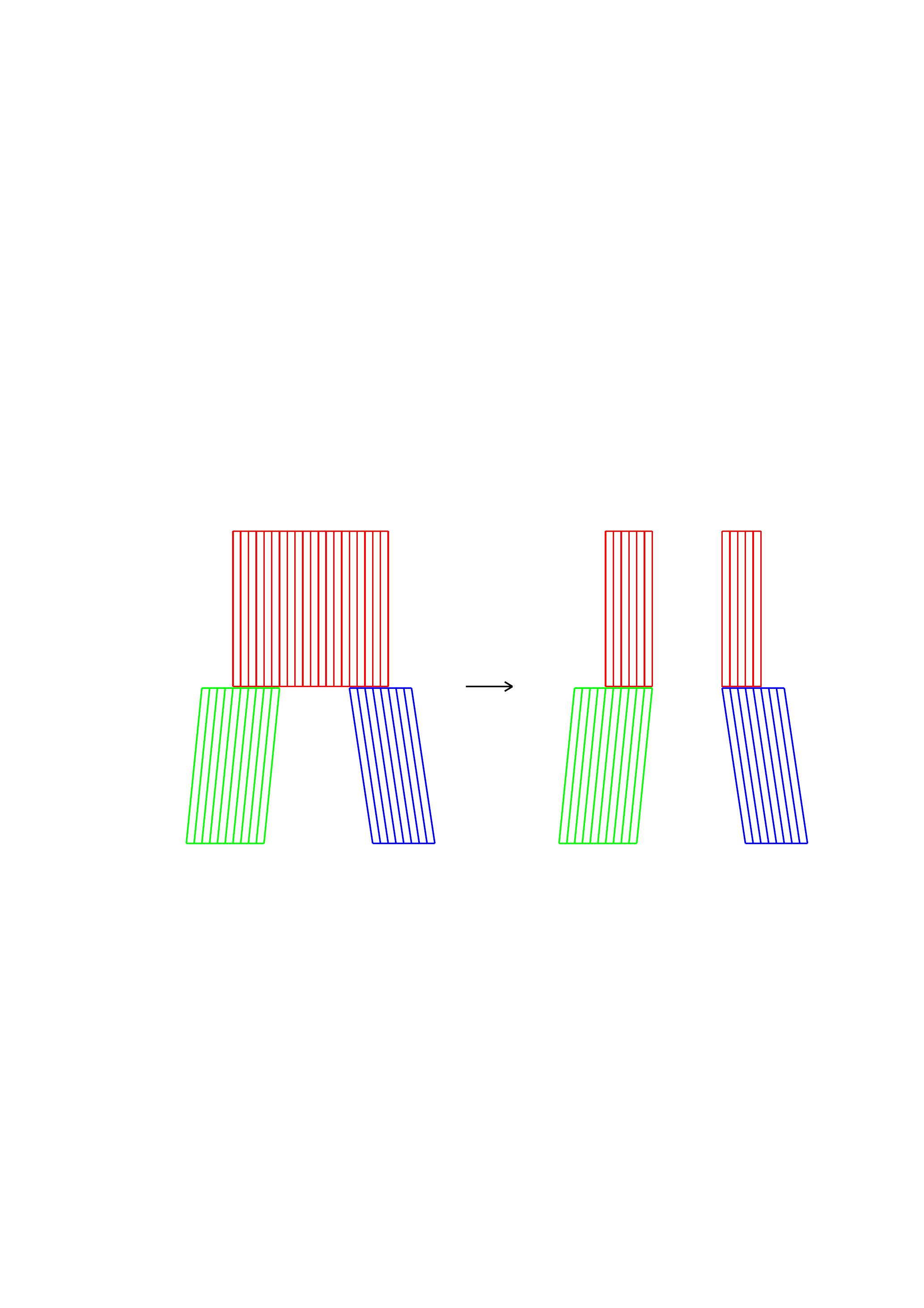}
\put (-190,100){J}
\caption{Collapse from a free subarc}
\label{55}
\end{figure}

In a band complex it might happen that several bands are glued together in such way that they form a long band. In other words, we say that a sequence $B_{1},B_{2},...,B_{n}$ forms a \emph{long band} provided that:
\begin{itemize}
\item the top of $B_{j}$ is identified with the bottom of $B_{j+1}$ and meets no other bands for $j=1,2,...,n-1$, and
\item the sequence of bands is maximal with respect to these properties.

\noindent In the output of one iteration of the Rips machine algorithm long bands are treated as units and we can replace any long band by a singl band with bases of bottom $B_{1}$ and top $B_{n}$ and length which is equal to sum of lengths of all $n$ bands.
\end{itemize}

Let us check what happens with a band complex $S_{1}$ (we use the same notation for IIS and corresponding band complex) under the action of the Rips machine. As it was shown in Section 2, $\lambda_{1}$ is an algebraic number ---  the eigenvalue of an integer matrix, and $a, b, c, u$ are components of the eigenvector which corresponds to $\lambda_{1}$ such that normalizing condition $a+b+c=1$ holds. One can check that these numbers can be expressed by polynomials of $\lambda_1$ and rational numbers in the following way:

\begin{align}
a& =2\lambda_{1} - \lambda_{1}^{2}\\
b& =\lambda_{1}\\
c& =\lambda_{1}^{2} - 3\lambda_{1} + 1\\
u& =\lambda_{1}^{3}/4 - 3\lambda_{1}^2/2 + 5\lambda_1/2 - 1/4
\end{align}

After $4$ first iteration of the Rips machine we obtain the band complex with three bands. Bases of these bands are denoted by $a',b',c',u'$ and can be expressed in terms of bases of the original complex in the following way:
$$\begin{pmatrix}
a'\\b'\\c'\\u'
\end{pmatrix}
=
\begin{pmatrix} 
-4&4&1&2\\
-1&2&0&0\\
2&0&-1&-2\\
-1&3&0&-1
\end{pmatrix}
\begin{pmatrix}
a\\b\\c\\u
\end{pmatrix}.
$$

After the next iteration of the Rips machine from our band complex $X$ we obtain the band complex that consists of two support intervals with four bands attached to them. Let us denote width of bases of such bands by $r_{1}$ and $r_{2}$ for two red bands (from left to right), $g$ for the green band and $n$ for the blue band. The length of the subinterval of support interval between two red intervals will be denoted by $h$. Vertical lengths of corresponding bands will be denoted by $l_{1}$ and $l_{2}$ for two red bands, $l_{3}$ for the green band and $l_{4}$ for the blue band. One can check that $l_{1}=2, l_{2}=1, l_{3}=5, l_{4}=5$ and $$\begin{pmatrix}
r_{1}\\r_{2}\\h\\g\\n
\end{pmatrix}
=
\begin{pmatrix} 
1&-1&-1&0\\
-1&0&2&2\\
0&1&0&-1\\
0&1&0&0\\
0&0&1&0
\end{pmatrix}
\begin{pmatrix}
a'\\b'\\c'\\u'
\end{pmatrix}.
$$ This band complex will be denoted by $Y$. The scheme of these 5 steps of the Rips machine is shown in Fugure \ref{2} and band complex $Y$ is shown in the top of Figure \ref{3}.
\begin{figure}
\includegraphics[width=20cm,height=14cm]{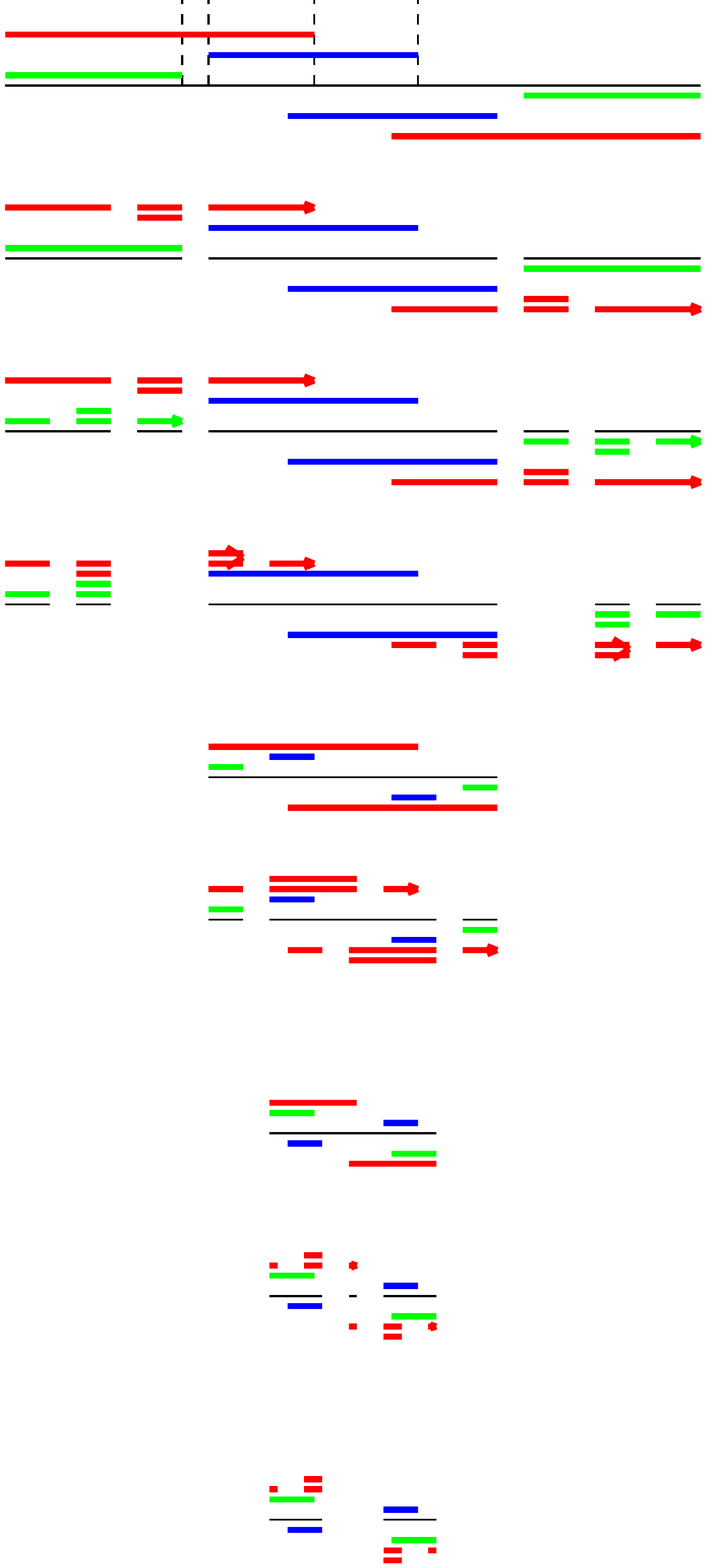}
\put(-410,365){\vector(0,-10){10}}
\put(-410,320){\vector(0,-10){10}}
\put(-410,275){\vector(0,-10){10}}
\put(-409,235){\line(0,-10){10}}
\put(-411,235){\line(0,-10){10}}
\put(-410,195){\vector(0,-10){10}}
\put(-411,150){\line(0,-10){10}}
\put(-409,150){\line(0,-10){10}}
\put(-410,100){\vector(0,-10){10}}
\put(-411,50){\line(0,-10){10}}
\put(-409,50){\line(0,-10){10}}
\put(-482,400){$b$}
\put(-470,400){$u$}
\put(-420,400){$a$}
\put(-385,400){$u+c$}
\caption{First Steps of the Rips Machine: Example 1}
\label{2}
\end{figure}

\noindent One can check that 
\begin{align}
r_{1}& =\lambda_{1}^{3}\\
r_{2}& =-2\lambda_{1}^{3}+5\lambda_{1}^{2}-5\lambda_{1}+1\\
h& =\lambda_{1}^{3}/4-3\lambda_{1}^2/2+3\lambda_{1}/2-1/4\\
g& =\lambda_{1}^2\\
b& =-\lambda_{1}^{3}/2+\lambda_{1}^2-2\lambda_{1}+1/2
\end{align}

In the remaining part of the proof we are going to show that after next $6$ iteration of the Rips machine the measure (width) of each band is multiplied by $\lambda_{1}^2$. Indeed, one can check that after these $6$ iterations we obtain a band complex with the same combinatorial configuration of bands as for complex $Y$, and bases of a resulting complex can be expressed in terms of bases of complex $Y$ in the following way: 
$$\begin{pmatrix}
r'_{1}\\r'_{2}\\h'\\g'\\n'
\end{pmatrix}
=R_{1}
\begin{pmatrix}
r_{1}\\r_{2}\\h\\g\\n
\end{pmatrix},
$$
where $$R_{1}=\begin{pmatrix} 
8&2&4&-5&0\\
-2&5&0&2&-4\\
-2&-2&-1&1&1\\
4&2&2&-2&-1\\
-3&0&-2&2&0
\end{pmatrix}.$$

The lengths of the bands of the resulting complex (let us denote them by $l'_{1},l'_{2},l'_{3},l'_{4}$, respectively) can be expressed in terms of the lengths of bands of $Y$ in the following way:
$$\begin{pmatrix}
l'_{1}\\l'_{2}\\l'_{3}\\l'_{4}
\end{pmatrix}
=L_{1}
\begin{pmatrix}
l_{1}\\l_{2}\\l_{3}\\l_{4}
\end{pmatrix},
$$
where $$L_{1}=\begin{pmatrix} 
0&2&1&2\\
0&1&0&0\\
2&0&4&1\\
1&2&4&2
\end{pmatrix}.$$
The cycle is represented in the Figure \ref{3}.
\begin{figure}
\includegraphics[width=16cm,height=17cm]{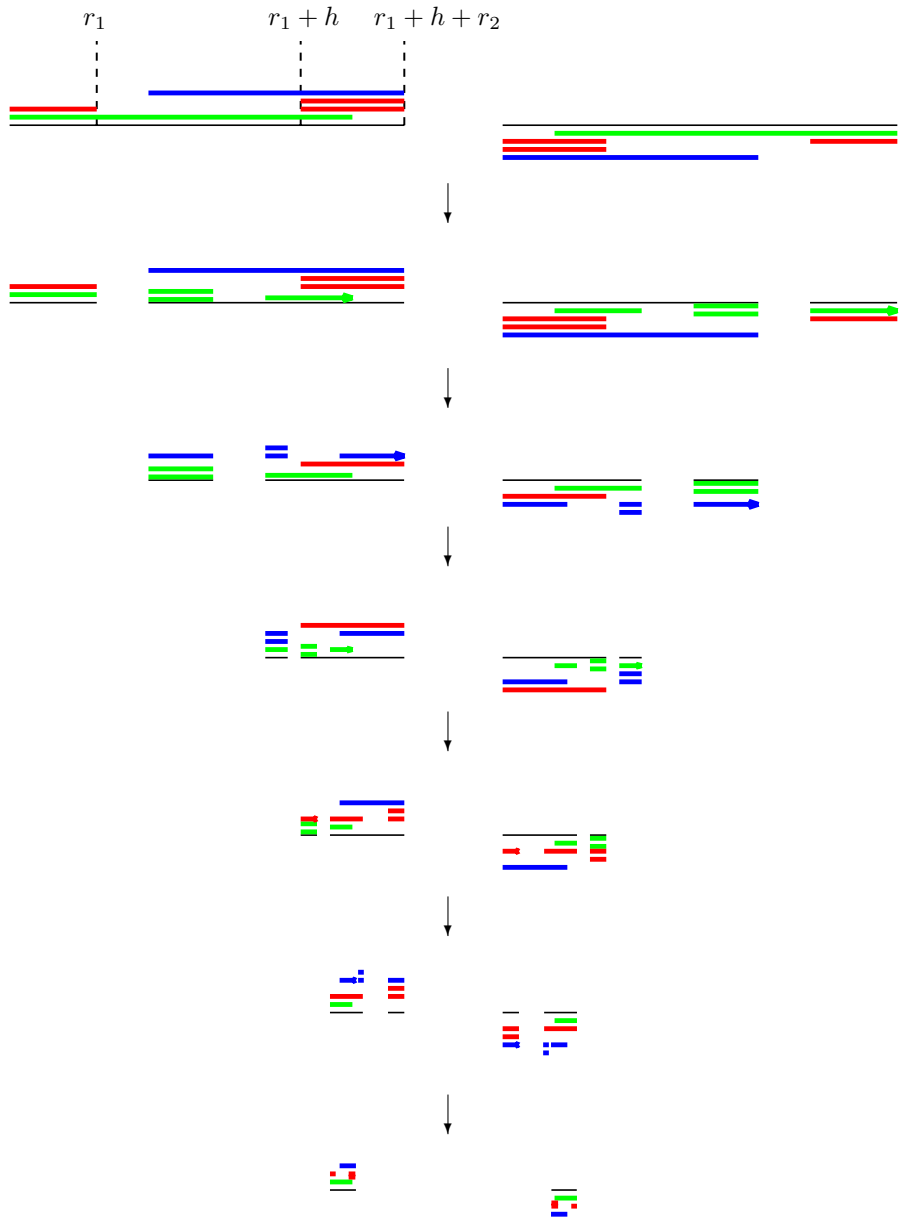}
\put(-282,430){\vector(0,-10){15}}
\put(-282,360){\vector(0,-10){15}}
\put(-282,300){\vector(0,-10){15}}
\put(-282,230){\vector(0,-10){15}}
\put(-282,160){\vector(0,-10){15}}
\put(-282,85){\vector(0,-10){15}}
\put(-420,490){$r_{1}$}
\put(-350,490){$r_{1}+h$}
\put(-310,490){$r_{1}+h+r_{2}$}
\caption{The Cycle of the Rips Machine: Example 1}
\label{3}
\end{figure}
\noindent Taking into account that $\lambda_{1}$ is a root of characterictic polynomial of matrix $N_{1}$ and so $$\lambda_{1}^4-\lambda_{1}^3-4\lambda_{1}^2+5\lambda_{1}-1=0,$$ in terms of polynomials with $\lambda_{1}$ and rational coefficients we have:
\begin{align}
r'_{1}& =5\lambda_{1}^{3}-\lambda_{1}^2-4\lambda_{1}+1 = r_{1}\lambda_{1}^2\\
r'_{2}& =-10\lambda_{1}^{3}+23\lambda_{1}^{2}-17\lambda_{1}+3 = r_{2}\lambda_{1}^2\\
h'& =5\lambda_{1}^{3}/4-13\lambda_{1}^2/2+13\lambda_{1}/2-5/4 = h\lambda_{1}^2\\
g'& =\lambda_{1}^3+4\lambda_{1}^2-5\lambda_{1}+1 = g\lambda_{1}^2\\
b'& =-7\lambda_{1}^{3}/2+5\lambda_{1}^2-3\lambda_{1}+1/2 = b\lambda_{1}^2
\end{align}

Now we only need to compare the smallest eigenvalue of $R_{1}$ (which is equal to $\lambda_{1}^2$) with the biggest eigenvalue of $L_{1}$ (let us denote it by $\mu_{1}$). One can check that $\lambda_{1}^2\approx0.0647$ and $\mu_{1}\approx6.1329$ and their product is strictly less than one. Therefore, almost all graphs $\Gamma_{x}$ have only one topological end.
\end{proof}
For finishing the proof of our main theorem, we prove the following 
\theoremstyle{theorem}
\begin{lemma}
If almost all graphs $\Gamma_{x}$ which correspond to orbits of IIS $S_{1}$ have exactly one topological end, then sections of the surface $\widehat M$ by almost any plane orthogonal to $H=(0,1,0)$ consists of exactly one connected component.
\end{lemma}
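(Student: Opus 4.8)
The plan is to express the number of connected components of a regular section $\alpha\cap\widehat M$ through the number of topological ends of an orbit graph of $S_1$, using as a bridge the band complex $\widehat X$ and the handlebody cover $\widehat{M_1}$ already introduced in Section~3, and then to invoke the hypothesis. First I would reduce to a planar region. Fix a regular value $t$ of $x^2$ (this discards only a Lebesgue-null set of $t$, namely those for which $\alpha_t:=\{x^2=t\}$ fails to be transverse to $\widehat M$). Since $\widehat M$ separates $\mathbb R^3$ into $\widehat{M_1}$ and the complementary handlebody cover $\widehat{M_2}$, the $1$-manifold $\alpha_t\cap\widehat M$ is exactly the topological frontier inside $\alpha_t\cong\mathbb R^2$ of the planar region $\Omega_t:=\alpha_t\cap\widehat{M_1}$, so the number of components of the section equals the number of components of $\mathrm{Fr}\,\Omega_t$. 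By minimality of $S_1$ the section has no compact component, and, as shown in Section~3 from the absence of closed leaves of $F$, every regular component of $\Omega_t$ is non-compact and simply connected; moreover $\Omega_t$ deformation retracts onto $\alpha_t\cap\widehat X$.

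The main obstacle is the next step: the dictionary with the orbits of $S_1$, which is the content imported from \cite{1}. One has to show that for almost every $t$ the graph $\alpha_t\cap\widehat X$ is connected and isomorphic, as a metric graph, to the orbit graph $\Gamma_{x(t)}(S_1)$ of a suitable point $x(t)$ of the support interval, the vertices being the points of the support interval cut at height $t$ and the edges the band-crossings. Two points need care. That the section meets $\widehat X$ in a single leaf of the vertical lamination uses that an orbit graph is a tree for almost every base point (\cite{1},\cite{11}), hence lifts homeomorphically to the abelian cover $\widehat X$, together with the fact that distinct leaves lie at distinct heights modulo the period group. And the transfer of ``almost all'' uses that $t\mapsto x(t)$ is affine on the range of heights of the support interval and equivariant under translating $t$ by the periods $H\cdot e_i$, whose $\mathbb Z$-span is dense in $\mathbb R$, so that a Lebesgue-null set of exceptional base points yields a Lebesgue-null set of exceptional $t$. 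Combined with the hypothesis of the Lemma this gives: for almost every $t$ the graph $\alpha_t\cap\widehat X$ is an infinite tree with exactly one topological end, and in particular $\Omega_t$ is connected.

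Finally I would close with a planar-topology argument. The retraction $\widehat{M_1}\searrow\widehat X$ from Section~3 is, over each plane $x^2=\mathrm{const}$, a regular-neighbourhood collapse — this is the geometric meaning of the Rips-machine moves of \cite{4} (collapse from a free subarc, contraction of long bands) — so for almost every $t$ the region $\Omega_t$ is a regular neighbourhood in $\alpha_t$ of the properly embedded tree $\alpha_t\cap\widehat X$. A regular neighbourhood of a properly embedded tree with $e$ topological ends has a frontier with exactly $e$ components (the model cases being the half-plane for $e=1$ and the strip for $e=2$; the strip-like alternative is precisely what the hypothesis ``$\Gamma_x$ has one topological end'' excludes), so $\mathrm{Fr}\,\Omega_t$ has a single component. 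Since $\widehat M$ is unbounded in the $x^2$-direction the section is never empty, and therefore for almost every plane orthogonal to $H$ the section $\alpha\cap\widehat M$ consists of exactly one connected curve. The bookkeeping of the first and third steps is routine; all the real work sits in the dictionary of \cite{1} and the regular-neighbourhood statement, i.e.\ in the structure of the thin band complex and its Rips-machine collapse.
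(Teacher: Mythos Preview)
Your argument is correct and in fact cleaner than the paper's, but it proceeds by a genuinely different mechanism. The paper does \emph{not} work only with $\widehat{M_1}$: it first invokes the central symmetry of $M$ about the point $(3/10,(2a+c+b-u)/2,1/4)$ to conclude that the foliation looks the same on \emph{both} complementary handlebodies, so that for almost every plane each of the two planar regions $\{f<0\}$ and $\{f>0\}$ retracts onto a one-ended tree and hence has Euler characteristic~$1$; it then runs an inclusion--exclusion count $\chi(\mathbb R^{2})=\chi(f<0)+\chi(f>0)-\chi(\text{boundary})$ to force the boundary to have a single component.

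Your route bypasses the symmetry entirely: you use only one side $\Omega_t=\alpha_t\cap\widehat{M_1}$, identify it as a regular neighbourhood of the properly embedded one-ended tree $\alpha_t\cap\widehat X$, and appeal to the planar fact that such a neighbourhood has connected frontier (the half-plane model, as opposed to the strip model for two ends). This buys you generality---the argument would go through for any thin IIS giving one-ended orbit graphs, whether or not the associated surface happens to be centrally symmetric---at the cost of making explicit the regular-neighbourhood statement and the ``dictionary'' transfer of null sets from $x$ to $t$, both of which the paper leaves implicit. The paper's approach is shorter on the page but leans on the accidental symmetry of this particular example and on an Euler-characteristic identity for non-compact planar pieces that is stated rather informally.
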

\begin{proof}
It is easy to check that the piecewise linear surface $M$ constructed above is always symmetric relative to the point $(3/10,(2a+c+b-u)/2,1/4)$ which implies that the foliation is the same in both parts into which the surface cuts the torus $\mathbb T^3$. Let $f$ be a function such that our surface is a level function of $f$. Let us consider particular plane section and suppose that it contains at least two unbounded components. Then $$\chi(\mathbb R^{2})=\chi(f<0)+\chi(f>0)-\chi(boundary)$$ and we have a contradiction because $\chi(\mathbb R^{2})=1,\chi(f<0)=\chi(f>0)=1$ due to the result about single topological end, but $\chi(boundary)>1$. 
\end{proof}

Analogically, we can prove the similar result for Dynnikov's example of chaotic section. Referring to the same argument about symmetry of the surface, we only need to prove the following.
\theoremstyle{theorem}

\begin{proposition} Let $\left(a,b,c,d,e\right)$ be an eigenvector
of the matrix $N_{2}$ with the eigenvalue $\lambda_{2}$ and positive coordinates.
Then for the corresponding interval identification system
\begin{equation*}
\begin{split}
S_{2}=(\left[0,a+b+c\right];& \left[0,a\right]\leftrightarrow\left[b+c,a+b+c\right],\\
                    & \left[0,b\right]\leftrightarrow\left[a+c,a+b+c\right], \\
                    & \left[d,d+c\right]\leftrightarrow\left[e,e+c\right])
\end{split}
\end{equation*}
almost all graphs $\Gamma_{x}$ have only one topological end.
\end{proposition}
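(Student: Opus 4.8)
The plan is to follow the same strategy as in the proof of the previous Proposition, now applied to the band complex associated with $S_{2}$. First I would record that $S_{2}$ is of thin type (Proposition above) and that, as proved independently in \cite{1} and \cite{11}, almost all graphs $\Gamma_{x}$ are then infinite trees; since $S_{2}$, like $S_{1}$, has the same number of valence-one and valence-three vertices, each such tree has either one or two topological ends. Thus it suffices to show that the pruning process --- repeatedly deleting valence-one vertices together with their edges (including those that are points of the support interval) --- eventually removes almost every point of the support interval, which by \cite{4} is exactly equivalent to having a single topological end.

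Next I would realize $S_{2}$ as a band complex with vertical lamination and run the Rips machine, tracking only the combinatorics of the bands together with the two numerical vectors attached to them: the vector of base widths and the vector of vertical band lengths. Using that $\lambda_{2}$ is the Perron--Frobenius eigenvalue of $N_{2}$ and that the normalization $a+b+c=1$ holds, I would first express $a,b,c,d,e$ as polynomials in $\lambda_{2}$ with rational coefficients, just as is done for $S_{1}$ above. Then I would carry out the first several moves $M5$ (collapse from a free subarc), treating long bands as units, until the band complex reaches a combinatorial configuration $Y_{2}$ that is reproduced by further iteration. Since for the Rauzy induction ten admissible transmissions on the right reproduce $S_{2}$ contracted by $\lambda_{2}$ (Section 2), the analogous computation for the Rips machine should give, after the preparatory steps, a cycle of iterations whose effect on the width vector is multiplication by an explicit integer matrix $R_{2}$ with smallest positive eigenvalue $\lambda_{2}^{2}$, and whose effect on the vertical-length vector is multiplication by a non-negative integer matrix $L_{2}$.

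The decisive step, exactly as before, is the comparison $\lambda_{2}^{2}\cdot\mu_{2}<1$, where $\mu_{2}$ is the largest eigenvalue of $L_{2}$: the widths of the free subarcs that get collapsed shrink geometrically with ratio $\lambda_{2}^{2}$ per cycle, while the lengths of the bands --- which govern how slowly points near a band are pruned --- grow with ratio $\mu_{2}$, so if the product is strictly less than one the total measure of the not-yet-pruned set tends to zero, whence almost all points of the support are eventually removed and almost all $\Gamma_{x}$ have exactly one topological end. Since $\lambda_{2}\approx 0.0797$ we have $\lambda_{2}^{2}\approx 6.4\cdot 10^{-3}$, so it will suffice to check that $\mu_{2}$ is not too large, which a direct eigenvalue estimate for the small explicit matrix $L_{2}$ settles.

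The main obstacle I expect is the combinatorial bookkeeping of the Rips machine for $S_{2}$: identifying how many preparatory moves are needed to reach the self-similar configuration $Y_{2}$, verifying that this configuration is genuinely reproduced after one full cycle (so that $R_{2}$ and $L_{2}$ are well defined), and confirming that $\lambda_{2}^{2}$ really is the smallest eigenvalue of $R_{2}$. This is longer and more delicate than for $S_{1}$, since $N_{2}$ is $5\times 5$ and the reference construction uses ten transmissions rather than six and the band complex involves more than one support interval; but it is a finite, mechanical verification, and once $R_{2}$ and $L_{2}$ are in hand the final inequality, and hence the conclusion, is immediate.
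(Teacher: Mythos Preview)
Your overall strategy is exactly that of the paper: realize $S_{2}$ as a band complex, run the Rips machine until a self-similar configuration is reached, extract integer matrices $R_{2}$ (acting on widths) and $L_{2}$ (acting on vertical lengths), and conclude by checking that the product of the contraction rate for widths and the expansion rate for lengths is strictly less than~$1$.

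However, two of your concrete predictions, extrapolated from the $S_{1}$ case, are wrong and would mislead you if you trusted them rather than the computation. First, the self-similar configuration $Z'$ reached after the preparatory moves has a \emph{single} support interval with \emph{three} bands (so $L_{2}$ is $3\times 3$, while $R_{2}$ is $5\times 5$ because five width parameters are tracked), not a multi-interval complex as for $S_{1}$. Second, and more importantly, the contraction factor per cycle on the widths is $\lambda_{2}$, not $\lambda_{2}^{2}$: after $7$ preparatory iterations and then a cycle of $5$ further iterations the widths are multiplied by $\lambda_{2}$. The decisive inequality is therefore $\lambda_{2}\cdot\mu_{2}<1$ with $\lambda_{2}\approx 0.0798$ and $\mu_{2}\approx 7.95$, giving a product $\approx 0.63$. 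Your plan still succeeds because this product is below~$1$, but the margin is much smaller than your $\lambda_{2}^{2}\approx 6.4\cdot 10^{-3}$ would suggest; had $\mu_{2}$ been somewhat larger, your heuristic would have falsely reassured you. In short: right method, but do the bookkeeping rather than guessing the exponents from the $S_{1}$ analogy.
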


\begin{proof}
As in the previous theorem, we start with a band complex, associated with the interval identification system $S_{2}$ and apply the Rips machine algorithm to this complex (let us denote it by $Z$). Note that $a$-intervals are represented by red band, $b$-intervals are represented by green band and $c$-intervals are represented by blue bands. Note that $(a,b,c,d,e)$ is an eigenvalue of matrix $N_{2}$ which corresponds to the eigenvalue $\lambda_{2}$ and so all these numbers can be represented as polynomials of $\lambda_{2}$ with rational coefficients. Note also that $\lambda_{2}$ is a non-rational root of characteristic polynomial of $N_{2}$ and $\pm1$ are also roots of the same polynomial and so $\lambda_{2}$ satisfies the following equation:
$$\lambda_{2}^3+8\lambda_{2}^2+12\lambda_{2}-1=0.$$

One can check that we have the following:
\begin{align}
a& =-10\lambda_{2}^{2}/3-58\lambda/3+2\\
b& =2\lambda_{2}^{2}/3+11\lambda_{2}/3\\
c& =8\lambda_{2}^{2}/3+47\lambda_{2}/3-1\\
d& =7\lambda_{2}^{2}/3+41\lambda_{2}/3-2/3\\
e& =-10\lambda_{2}^{2}/3-59\lambda_{2}/3+5/3
\end{align}

First $7$ iterations of the Rips machine result in a new band complex which consists of one support interval and three bands attached. Let us denote widths of bands of this complex by $a'$ for the red band, $b'$ for the green band and $c'$ for the blue band. Lengths of the parts of the support interval between the first points of $a'$-interval and $c'$-interval will be denoted by $d'$ and $e'$, respectively (see Figure for the details). Vertical lengths of three bands will be denoted by $l_{1}$ for the red band, $l_{2}$ for the green band and $l_{3}$ for the blue band.
One can check that $l_{1}=15, l_{2}=14, l_{3}=15$ and
$$\begin{pmatrix}
a'\\b'\\c'\\d'\\e'
\end{pmatrix}
=\begin{pmatrix} 
5&-9&-5&5&-5\\
-1&3&1&-2&2\\
-3&4&2&-1&1\\
5&-9&-4&4&-3\\
0&-2&-2&3&-2
\end{pmatrix}
\begin{pmatrix}
a\\b\\c\\d\\e
\end{pmatrix}.$$

Resulting band complex will be denoted by $Z'$. First $5$ iterations of the Rips machine are shown in Figure \ref{9}.
\begin{figure}
\includegraphics[width=20cm,height=14cm]{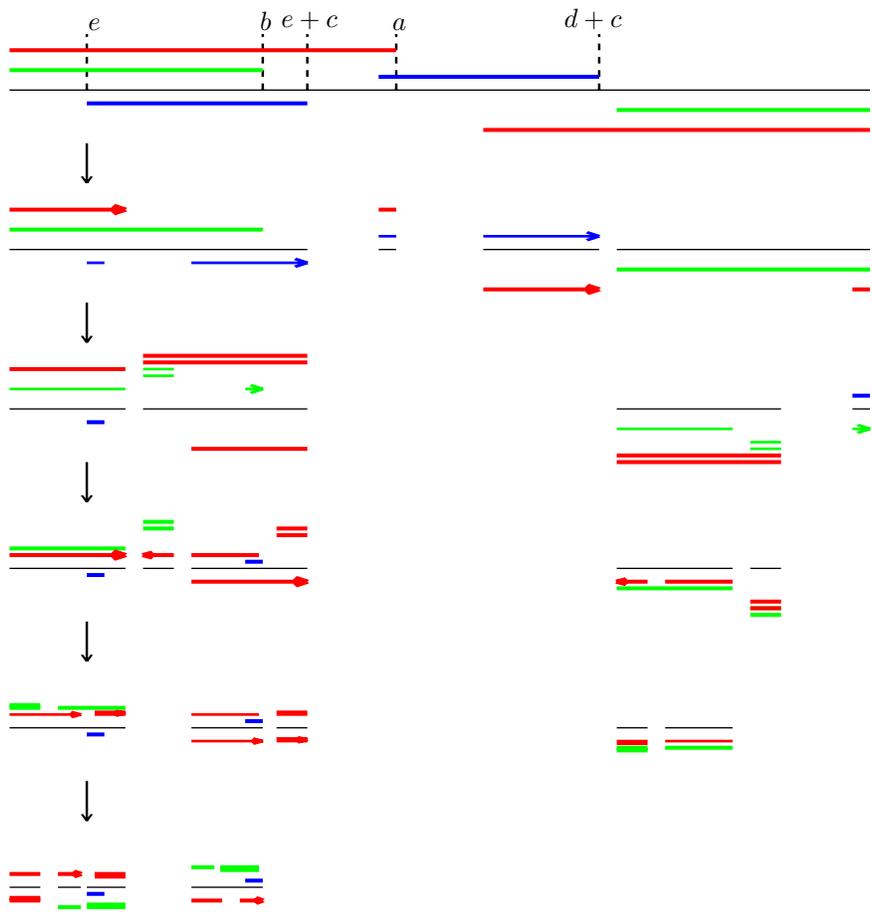}
\put(-415,400){$a$}
\put(-465,400){$b$}
\put(-530,400){$e$}
\put(-457,401){$e+c$}
\put(-350,401){$d+c$}
\caption{First Steps of the Rips Machine: Example 2}
\label{9}
\end{figure}
In terms of polynomials with degrees of $\lambda_{2}$ and rational coefficients we have:
\begin{align}
a'& =-23\lambda_{2}^{2}/3-124\lambda_{2}/3+10/3\\
b'& =-10\lambda_{2}^{2}/3-62\lambda_{2}/3+5/3\\
c'& =37\lambda_{2}^{2}/3+212\lambda_{2}/3-17/3\\
d'& =-14\lambda_{2}^{2}-236\lambda_{2}/3+19/3\\
e'& =7\lambda_{2}^{2}+125\lambda_{2}/3-10/3
\end{align}
In the remaining part of the proof we are going to check that after next $5$ iterations of the Rips machine the measure (width) of each band is multiplied by $\lambda_{2}$ (see Figure \ref{5}).
\begin{figure}
\includegraphics[width=20cm,height=15cm]{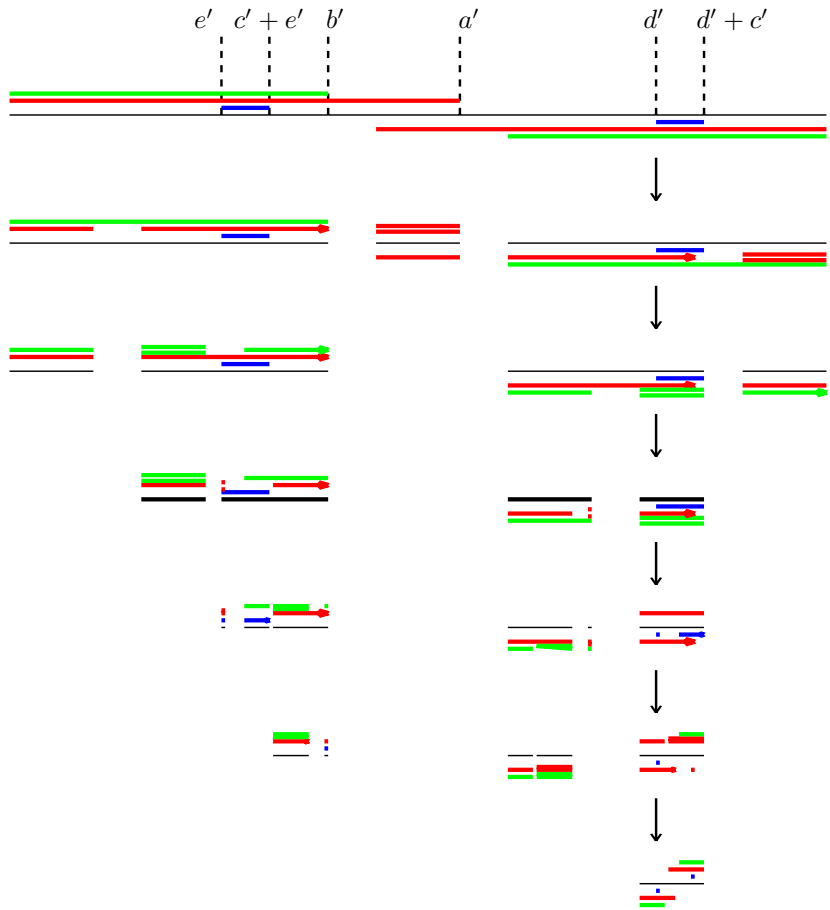}
\put(-390,410){$a'$}
\put(-440,410){$b'$}
\put(-475,410){$c'+e'$}
\put(-490,410){$e'$}
\put(-320,410){$d'$}
\put(-300,410){$d'+c'$}
\caption{The Cycle of the Rips Machine: Example 2}
\label{5}
\end{figure}
Indeed, one can check that these $5$ iterations result in a band complex with a same combinatorial configuration of bands as for complex $Z'$, and parameters of a resulting complex can be expressed in terms of parameters of complex $Z'$ in the following way:
$$\begin{pmatrix}
a''\\b''\\c''\\d''\\e''
\end{pmatrix}
=R_{2}
\begin{pmatrix}
a'\\b'\\c'\\d'\\e'
\end{pmatrix},$$where $$R_{2}=\begin{pmatrix} 
-5&5&1&1&0\\
1&-2&0&0&1\\
2&-2&-1&0&-1\\
-4&5&1&0&1\\
-2&1&-1&1&0
\end{pmatrix}.$$

It means that
\begin{align}
a''& =20\lambda_{2}^{2}+286\lambda_{2}/3+23/3 = a'\lambda_{2}\\
b''& =6\lambda_{2}^{2}+125\lambda_{2}/3-10/3= b'\lambda_{2}\\
c''& =-28\lambda_{2}^{2}-461\lambda_{2}/3+37/3= c'\lambda_{2}\\
d''& =100\lambda_{2}^{2}/3+523\lambda_{2}/3-14= d'\lambda_{2}\\
e''& =-43\lambda_{2}^{2}/3-262\lambda_{2}/3+7= e'\lambda_{2}
\end{align}

The lengths of bands of the resulting complex (let us denote them by $l_{1}'', l_{2}'', l_{3}''$) are expressed in terms of the lengths of bands of $Z'$ in the following way:
$$\begin{pmatrix}
l_{1}''\\l_{2}''\\l_{3}''
\end{pmatrix}
=L_{2}
\begin{pmatrix}
l_{1}\\l_{2}\\l_{3}
\end{pmatrix},$$where $$L_{2}=\begin{pmatrix} 
5 & 3 & 0\\
4 & 3 & 1\\
4 & 2 & 1
\end{pmatrix}.$$

Now we only need to compare the smallest eigenvalue of $R_{2}$ (which is equal to $\lambda_{2}$) with the biggest eigenvalue of $L_{2}$ (let us denote it by $\mu_{2}$). One can check that $\lambda_{2}\approx0.0798$ and $\mu_{2}\approx7.95$ and their product is strictly less than one. Therefore, almost all graphs $\Gamma_{x}$ have only one topological end.
\end{proof}

\emph{Acknowledgements.} I wish to thank I. Dynnikov for posing the problem and for constant attention to this work and T. Coulbois for explaining some of the details of the Rips theory to me. I am also very grateful to A. Zorich for making a lot of suggestions and improvements in an earlier version of the paper.


\begin{thebibliography}{99}
\bibitem[1]{1}Ivan Dynnikov, \emph {Interval Identification Systems and
Plane Sections of 3-Periodic Surface}, Proceedings of the Steklov Institute
of Mathematics, 2008, Vol. 263, 65--77

\bibitem[2]{2}Alexandra Skripchenko, \emph{Interval Identification Systems of order three}, Discrete Contin. Dyn. Sys., Vol. 32, no. 2, 2012, 643--656

\bibitem[3]{3}S.P. Novikov, \emph {The Hamiltonian Formalism and Many-Valued Analogue of Morse Theory}, Usp. Mat. Nauk 37, no 5, 1982, 3--49 

\bibitem[4]{4}Mladen Bestvina, Mark Feighn, \emph {Stable Actions of Groups
On Real Trees}, Invent. math. 121, 1995, 287--321

\bibitem[5]{5}A. Ya. Maltsev and S. P. Novikov, \emph {Dynamical Systems, Topology, and Conductivity in Normal Metals}, J. Stat.
Phys. 115, 2003, 31–-46

\bibitem[6]{6} Anton Zorich, \emph {A Problem of Novikov on the Semiclassical Motion of an Electron in a Uniform Almost Rational
Magnetic Field}, Russ. Math. Surv., 1984, 39 (5), 287–-288

\bibitem[7]{7}Ivan Dynnikov, \emph {Semiclassical Motion of the Electron. A Proof of the Novikov Conjecture in General Position and Counterexamples}, Solitons, Geometry and Topology: on the Crossroad (AMS Transl., Ser. 2 179, 1997, 45--73)

\bibitem[8]{8}Ivan Dynnikov and Bert Wiest, \emph{On the Complexity of Braids}, J. Eur. Math. Soc., 2007, 801--840

\bibitem[9]{9}Michael Boshernitzan and Isaac Kornfeld, \emph{Interval Translation Mappings}, Ergodic Theory and Dynamical Systems, 15, 1995, 821--832

\bibitem[10]{10}Gilbert Levitt, \emph{La Dynamique des Pseudogroupes de Rotations}, Invent. Math., 113, 1993, 633--670

\bibitem [11]{11}Damien Gaboriau, \emph{Dynamique des Syst\`emes d'isom\'etries: Sur Les Bouts des Orbits}, Invent. Math., 126, 1996, 297--318

\bibitem [12]{12} Veech W.A., \emph{Gauss Measures for Transformations on the Space of Interval Exchange Maps}, Ann. Math., Ser. 2, 115 (1), 1982, 201–-242

\end{thebibliography}
\end{document}